\newtheorem{thm}{Theorem}[section]
\newtheorem{lem}[thm]{Lemma}
\newtheorem{dfn}[thm]{Definition}
\newtheorem{cor}[thm]{Corollary}
\theoremstyle{remark}
\newtheorem{rem}[thm]{Remark}
\newcommand{\EMD}{\E( \check{Y}^{(n)}_{\tau_n})}
\newcommand{\EMDA}{\E( \check{Y}_{\xi_0})}
\newcommand\Var{{\mathrm {Var}}}
\newcommand{\BBe}{\mathbb{E}}
\newcommand{\BBp}{\mathbb{P}}
\newcommand{\BBn}{\mathbb{N}}
\newcommand{\BBr}{\mathbb{R}}
\newcommand{\BBone}{\mbox{1}\kern-.25em \mbox{I}}
\newcommand{\vite}{a_n}
\newcommand{\usdp}{\frac{1}{2\pi}}
\newcommand{\ipp}{\int_{-\pi}^{\pi}}
\newcommand{\br}{\mathcal{B}}
\newcommand{\Int}{\mathrm{int}\,}
\newcommand{\Clo}{\mathrm{clo}\,}
\def\Horj{ \widehat{\widetilde{X^{(n)}_{u,j}}}}
\def\Hor{ \widehat{\widetilde{X^{(n)}_{u}}}}
\def\Horsn{ \widehat{\widetilde{X_{u}}}}
\def\E{\ensuremath{\mathbb{E}}}
\def\R{\ensuremath{\mathbb{R}}}
\def\N{\ensuremath{\mathbb{N}}}
\def\P{\ensuremath{\mathbb{P}}}
\def\Brp{\Big)}
\def\Blp{\Big(}
\def\brp{\big)}
\def\blp{\big(}
\def \be{\begin{eqnarray*}}
\def \ee{\end{eqnarray*}}
\def \ben{\begin{eqnarray}}
\def \een{\end{eqnarray}}
\def\QED{\hfill\vrule height 1.5ex width 1.4ex depth -.1ex \vskip20pt}
\def\nor01{\mathcal{N}(0,1)}
\def\Exp1{\mathcal{E}(1)}
\def\u01{\mathcal{U}([0,1])}
\def\QED{\hfill\vrule height 1.5ex width 1.4ex depth -.1ex \vskip20pt}
\newcommand{\ind}[1]{{\bf 1}\hspace{ -0.14 true cm} {\tt l}_{#1}}
\begin{document}
\begin{center}
{\Large {\bf Conditional large and moderate deviations for sums of discrete random
  variables. Combinatoric applications.}}
\end{center}

\vskip 6mm

\begin{center}
Fabrice GAMBOA\footnote{IMT, Equipe de Statistique et Probabilit\'es,
UMR 5219,
Universit\'e Paul Sabatier,
118 Route de Narbonne
31062 Toulouse cedex 4, France.
\texttt{gamboa@math.ups-tlse.fr}}, Thierry KLEIN\footnote{ 
IMT, Equipe de Statistique et Probabilit\'es,
UMR 5219,
Universit\'e Paul Sabatier,
118 Route de Narbonne
31062 Toulouse cedex 4, France.
\texttt{tklein@math.ups-tlse.fr}} and Cl\'ementine PRIEUR\footnote{
IMT, Equipe de Statistique et Probabilit\'es,
UMR 5219,
INSA Toulouse, GMM,
135 avenue de Rangueil,
31077 Toulouse cedex 4, France.
\texttt{clementine.prieur@insa-toulouse.fr}}
\end{center}

\vskip 6mm

{\bf Running title.} Conditional large and moderate deviations.

\vskip 6mm

{\bf Abstract.} 
We prove large and moderate deviation principles for the distribution of an empirical mean conditioned by the value of the sum of discrete i.i.d. random variables. Some applications for combinatoric problems are discussed.
\vskip 6mm

{\bf Key words: Large and moderate deviation principles; Conditional distribution; Combinatoric problems.}

\vskip 6mm

{\bf Mathematics Subject Classification: 60F10; 60F05; 62E20; 60C05; 60J65; 68W40.}
\section{Introduction}
\label{sintro}
In many random combinatorial problems, the distribution of the interesting statistic is the law of an empirical mean built on an independent and identically
distributed
(i.i.d.) sample conditioned by
some exogenous integer random variable (r.v.). In general, 
this exogenous r.v. is also itself a sample mean built on integer r.vs. Hence, a general
frame for this kind of problem may be formalized as follows. Let $(q_n)$ be a positive integer sequence. Further, 
let  $\mathbf{X}=(X_j^{(n)})_{n\in\BBn^*, j=1,\ldots,nq_n}$ and 
$\mathbf{Y}=(Y_j^{(n)})_{n\in\BBn^*, j=1,\ldots,nq_n}$ be 
two triangular arrays of random variables. Both arrays are such that on their
lines the r.vs are i.i.d.. Moreover, it is assumed that the elements of the array $\mathbf{X}$ are integer. The interesting distribution is then the law of 
$(nq_n)^{-1}T_{n}:=(nq_n)^{-1}\sum_{j=1}^{nq_n}Y_j^{(n)}$ 
conditioned on a specific value of $S_n:=\sum_{j=1}^{nq_n}X_j^{(n)}$. That is
the conditional distribution
$${\mathcal{L}}_n:={\mathcal{L}}((nq_n)^{-1}T_n|S_n=np_n),$$
where $(p_n)$ is some given positive integer sequence. When the distribution of $(X_j^{(n)},Y_j^{(n)})$ does not depends on $n$, the Gibbs conditioning principle (\cite{VanC,Csis,DeZ98}) states that ${\mathcal{L}}_n$ converges weakly to the degenerated distribution concentrated on a point $\chi$ depending on the conditioning value (see Corollary \ref{pasencore}). Around the Gibbs conditioning principle,     
general limit theorems yielding the asymptotic behavior of the conditioned sum are given in 
\cite{Steck57,Holst79,Kud84}. Asymptotic expansions for the distribution
of the conditioned sum are proved in \cite{Hiphop,RoHHQ90}.
In this paper our aim is to prove a large deviation principle for
$\mathcal{L}_n$. Roughly speaking, this means that we will give an
exponential equivalent for this conditional distribution. On a finer scale, we prove a large deviation principle for
$\tilde{\mathcal{L}}_n:= \mathcal{L}(\sqrt{\frac{a_n}{nq_n}}(T_n-b_n)|S_n=np_n)$, where
$b_n$ is a centering factor specified in Theorem
\ref{modev} of Section \ref{secmod} and $a_n$ is a decreasing positive sequence
of real numbers with $a_n \rightarrow 0$, $na_nq_n \rightarrow + \infty$.
We then say that $\tilde{\mathcal{L}}_n$ satisfies a moderate deviation
principle \cite[Section 3.7]{DeZ98}. 
Our work follows the nice ones of Janson \cite{Ja01,Ja011}. In these last papers, a central limit 
theorem with moment convergence is proved. The starting point in the proof
is a simple representation of the conditional characteristic function as an
inverse Fourier transform. This representation was first given by Bartlett
\cite[Equation (16)]{Bar1938}. To establish large and moderate deviation principles we will make use of G\"artner-Ellis Theorem in which an asymptotic evaluation of the Laplace transform is needed. For this purpose, we first transcribe the Bartlett formula to get a simple integral representation for the conditional Laplace transform 
(see Lemma \ref{lfourfour}). The main result of \cite{Ja01} is quite general
as it only requires assumptions on the three first moments of
$(\mathbf{X},\mathbf{Y})$. Here we need further assumptions. However,
contrarily to \cite[Section 2]{Ja01}, we do not restrict to the central case
(conditioning on $S_n=\mathbb{E} (S_n)$) nor on the ``pseudo'' central case
(conditioning on $S_n=\mathbb{E}(S_n) + \mathcal{O}\left(
\sqrt{nq_n}\sigma_{X^{(n)}_i} \right)$, with $\sigma^2_X=\Var(X)$). In \cite{RoHHQ90},
the authors study general saddle point approximations for multidimensional
discrete empirical means and obtain an approximation formula for conditional
probabilities. We focus here on the exponential part of this formula,
stating a full large deviation principle (see Theorem \ref{thpgd}). Using
some classical tools of convex analysis we give an explicit natural and
elegant form for the rate function. Furthermore, we complement our study by
stating a moderate deviation principle for the conditional law (see Theorem \ref{modev}). As usual, the rate function is quadratic and the scaling factor is
the asymptotic variance, which can be interpreted here as a residual
variance in some linear regression model, generalizing the factor found in
\cite{Ja01}.  The paper is organized as follows. In the next section, to be
self contained, we first recall some classical results on large deviation
principles. Then we state our main results: a large deviation principle and
a moderate deviation principle
for conditioned sums. Section \ref{spro} is devoted to the proofs. In
Section \ref{discution} we apply our main results to some combinatorial
examples. We also discuss possible extensions to more general models. 
\section{Main results}
\label{smai}
\subsection{Large and moderate deviations}
\label{sunoas}
\subsubsection{Some generalities}
Let us first recall what is a large deviation principle (L.D.P.) (see for example \cite{DeZ98,Holl}). In the whole paper, $(\vite)$
is a decreasing positive sequence of real numbers with $\lim_{n \rightarrow \infty}a_n=0$.

\begin{dfn}
\label{dldp}
We say that a sequence $(R_{n})$ of probability measures on a measurable
Hausdorff space $(U,\br(U))$ satisfies a  LDP with rate function $I$ and speed
$(\vite)$ if:
\begin{itemize}
\item[i)] $I$ is lower semi continuous (lsc), with values in
$\BBr^{+}\cup\{+\infty\}$.
\item[ii)] For any measurable set $A$ of $U$:
$$-I(\Int A)\leq
\liminf_{n\rightarrow\infty}\vite\log R_{n}(A)\leq
\limsup_{n\rightarrow\infty}\vite\log R_{n}(A)\leq
-I({\Clo A}),$$
where $I(A)=\inf_{\xi\in A}I(\xi)$ and $\Int A$ (resp. $\Clo A$) is the
interior (resp. the closure) of $A$.
\end{itemize}
We say that the rate function $I$ is good if its level set
$\{x\in U:\; I(x)\leq a\}$ is  compact for any $a\geq 0$.
More generally, a sequence of $U$-valued random variables
is said to satisfy a
LDP if their distributions satisfy a LDP.
\end{dfn}

To be self-contained, we also  recall some definitions and results
which will be used in the sequel.
(we refer to \cite{DeZ98,Holl} for more on large deviations). 

\paragraph{Laplace and Fenchel-Legendre transforms}
To begin with, 
let $Z$ be a non negative integer random variable and define the span of $Z$ by $m_Z:=\sup\{m\in\N,\ \exists b\in\N,\ \mbox{Supp}(Z)\subset m\N+b\}$.
Let $\varphi_{Z}$ denote the characteristic function of $Z$. When $Z$
is square integrable, $\sigma_{Z}^{2}$ denotes its variance. For
$\tau$ lying in 
$\mbox{dom}\;\psi_{Z}:=\{\tau\in\BBr:\BBe[\exp(\tau Z)]<+\infty\},$
we define $\psi_{Z}(\tau):=\log\BBe[\exp(\tau Z)]$ as the cumulant generating function of
$Z$. Obviously, $\mbox{dom}\;\psi_{Z}$ contains at least $\BBr^{-}$
and $\psi_{Z}$ is analytic in the interior of
$\mbox{dom}\;\psi_{Z}$. We denote by $R_{Z}$ the interior of the range of $\psi_{Z}^{\prime}$.
It is well known that $R_{Z}$ is a subset of the interior of the convex hull of the support
of $Z$. These two subsets of $\BBr$ coincide whenever $\psi_{Z}$ is
essentially smooth (see definition below).
Further, let $\psi^{*}_{Z}$ denote the Fenchel-Legendre transform of
$\psi_{Z}$ \cite[Definition 2.2.2 p. 26]{DeZ98}. For any $\tau^{*}\in R_{Z}$, there exists a unique
$\tau_{\tau^{*}}\in\mbox{dom}\;\psi_{Z}$
such that $\psi_{Z}^{\prime}(\tau_{\tau^{*}})=\tau^{*}$ and we may define
$Z^{*,\tau^{*}}$ as a r.v.
on $\BBn$ having the following distribution
\begin{equation}\label{clementine}
\BBp(Z^{*,\tau^{*}}=k)=\exp[k\tau_{\tau^*}-\psi_{Z}(\tau_{\tau^{*}})]\BBp(Z=k),\;\;
(k\in\BBn).
\end{equation}
It is well known that $\E\left(Z^{*,\tau^{*}}\right)=\tau^*$.
For more details on the relationships between
$\psi_{Z},\psi_{Z}^{*},Z^{*,\tau^{*}}$
we refer to the book \cite{Bar78}.\\
Let now $(Z,W)$ be a random vector of $\BBr^{2}$. We naturally
extend some of the previous notations
to $(Z,W)$. For example, $\psi_{Z,W}$ is the cumulant generating function built on
$(Z,W)$ defined on $\mbox{dom}\;\psi_{Z,W}\subset\BBr^2$ and
$\psi^{*}_{Z,W}$ denotes the
Fenchel-Legendre transform of $(Z,W)$.
\paragraph{Convex functions}
\label{susucon}
Let $f$ be a proper convex function on $\BBr^{k}$. That is $f$ is convex and valued in $\BBr\cup\{+\infty\}$.
We say that $f$ is essentially smooth 
whenever it is differentiable on the non empty interior of dom$f$ and
it is steep. That is, 
for any vector $c$ lying on the boundary of $\mbox{dom} f$ 
$$\lim_{x\rightarrow c, x\in\Int \mbox{dom} f}\|\nabla f(x)\|=+\infty,$$
where $\nabla f(x)$ denotes the gradient of $f$ at point $x$.
\paragraph{G\"artner-Ellis Corollary.}
\begin{cor}\label{gel} [G\"artner-Ellis, \cite[Theorem 2.3.6 c) p.44]{DeZ98}] Let $(Z_n)$ be a sequence of
random variables valued in $\mathbb{R}$, $(a_n)_n$ a decreasing positive
sequence of real numbers with $\lim_{n \rightarrow \infty}a_n=0$. Define 
$\Lambda_n(\theta)=\ln \E e^{\theta Z_n}$.
Assume that
\begin{enumerate}
\item  for all $\theta\in\R$, $a_n \, \Lambda_n(\theta/a_n)\rightarrow \Lambda(\theta) \in
] - \infty , + \infty ]$,
 \item $0$ lies in the interior of 
 $\mbox{dom}(\Lambda(\theta))$ and $\Lambda(\theta)$ is essentially
smooth and lower semi continuous.
\end{enumerate}
Then $(Z_n)$ satisfies a LDP with good rate function $\Lambda^*$ and speed $a_n$.
\end{cor}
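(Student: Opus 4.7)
The plan is to establish the matching upper and lower large deviation bounds using the exponential Chebyshev inequality and a change-of-measure (tilting) argument respectively, following the classical scheme behind G\"artner--Ellis type results.

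For the upper bound on half-lines, apply Chebyshev's inequality to obtain, for any $\theta > 0$ and $x \in \R$,
\[
\P(Z_n \geq x) \leq e^{-\theta x + \Lambda_n(\theta)},
\]
then substitute $\theta \mapsto \theta/a_n$ so that
\[
a_n \log \P(Z_n \geq x) \leq -\theta x + a_n \Lambda_n(\theta/a_n).
\]
Passing to $\limsup$ via Assumption 1 and then optimizing over $\theta > 0$ yields $\limsup_n a_n \log \P(Z_n \geq x) \leq -\Lambda^*(x)$ for $x$ above $\Lambda'(0)$, and symmetrically with $\theta < 0$ below. A finite covering argument (principle of the largest term) extends the bound to compact sets. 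Exponential tightness, which follows from $0 \in \Int \mbox{dom}(\Lambda)$ since $\Lambda$ is then finite on a neighbourhood of the origin, upgrades this to arbitrary closed sets.

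For the lower bound, it suffices to show that for every $x \in \R$ with $\Lambda^*(x) < +\infty$ and every $\delta > 0$,
\[
\liminf_n a_n \log \P(|Z_n - x| < \delta) \geq -\Lambda^*(x).
\]
By essential smoothness, for any such $x$ in the interior of $\mbox{dom}(\Lambda^*)$ there exists $\eta \in \Int \mbox{dom}(\Lambda)$ with $\Lambda'(\eta) = x$ and $\Lambda^*(x) = \eta x - \Lambda(\eta)$. Introduce the tilted laws $\widetilde{\P}_n$ defined by
\[
\frac{d\widetilde{\P}_n}{d\P} = \exp\bigl(\tfrac{\eta}{a_n} Z_n - \Lambda_n(\tfrac{\eta}{a_n})\bigr).
\]
A direct computation shows that under $\widetilde{\P}_n$ the rescaled cumulant generating function of $Z_n$ converges to $\theta \mapsto \Lambda(\theta + \eta) - \Lambda(\eta)$, whose derivative at $0$ equals $x$; hence $Z_n \to x$ in $\widetilde{\P}_n$-probability. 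Inverting the change of measure yields
\[
\P(|Z_n - x| < \delta) \geq e^{-(\eta x + |\eta|\delta)/a_n + \Lambda_n(\eta/a_n)} \, \widetilde{\P}_n(|Z_n - x| < \delta),
\]
and taking $a_n \log$ followed by $\delta \downarrow 0$ delivers the matching lower bound.

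The main obstacle is the lower bound when the target $x$ approaches the boundary of $\Lambda'(\Int \mbox{dom}(\Lambda))$: without steepness one cannot solve $\Lambda'(\eta) = x$ there and the tilting construction breaks down. Essential smoothness is precisely what guarantees that $\Lambda'$ maps $\Int \mbox{dom}(\Lambda)$ onto the full interior of $\mbox{dom}(\Lambda^*)$, so the preceding argument covers every $x$ with $\Lambda^*(x) < \infty$. Finally, goodness of $\Lambda^*$ follows from the superlinear growth forced by $0 \in \Int \mbox{dom}(\Lambda)$, via standard convex duality.
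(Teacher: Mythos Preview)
The paper does not prove this corollary: it is stated with an explicit citation to Dembo--Zeitouni \cite[Theorem 2.3.6 c)]{DeZ98} and used as a black box. Your proposal, by contrast, reproduces the classical G\"artner--Ellis argument (exponential Chebyshev for the upper bound, exponential tightness from $0\in\Int\mbox{dom}\,\Lambda$, tilting for the lower bound), so there is nothing to compare against in the paper itself.

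As a sketch your argument is sound, with two places that would need tightening in a full proof. First, the step ``hence $Z_n\to x$ in $\widetilde{\P}_n$-probability'' does not follow merely from pointwise convergence of the tilted cumulant generating functions; the standard way is to apply the already established upper bound under $\widetilde{\P}_n$, whose limiting rate function $\tilde\Lambda^{*}$ vanishes only at $x=\Lambda'(\eta)$. Second, you cover $x$ in the interior of $\mbox{dom}\,\Lambda^{*}$ via exposed points, but the lower bound at boundary points (where $\Lambda^{*}(x)<\infty$ yet no exposing hyperplane exists) still requires an approximation of $x$ by interior exposed points together with lower semicontinuity of $\Lambda^{*}$; essential smoothness guarantees that the set of exposed points is exactly $\Lambda'(\Int\mbox{dom}\,\Lambda)$ and is dense in $\mbox{dom}\,\Lambda^{*}$, which is what makes this approximation work. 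These are routine completions and do not affect the correctness of your overall scheme.
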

\subsection{Main results}
\label{sumainiid}
\subsubsection{The model}
For $n\in\N^*$, let $(X^{(n)},Y^{(n)})$ be a random vector with
$X^{(n)}\in\BBn$. We assume that $m_{X^{(n)}}=1$ and that
$(X^{(n)},Y^{(n)})$ converges in law to $(X,Y)$ where $X$ is a non
essentially constant non negative integer valued r.v. having also
span $1$. Note that it implies that $\psi_X$ is strictly convex and that $X^{(n)}$ is not essentially constant and that $\psi_{X^{(n)}}$ is strictly convex for $n$ large enough.
Further let $\left((X^{(n)}_{i},Y^{(n)}_{i})\right)_{1\leq i\leq n}$ be an
i.i.d. sample
having the same distribution as $(X^{(n)},Y^{(n)})$. 

Let, for $n\in\BBn^{*}$ and $q_n\in\BBn^{*}$, 
$S_{n}=X^{(n)}_{1}+\cdots+X^{(n)}_{nq_n}$ and $T_{n}=Y^{(n)}_{1}+\cdots+Y^{(n)}_{nq_n}$.
In the whole paper $p_n$ will be a sequence of positive integers such that $\P(S_n=np_n)>0$.
\subsubsection{Large deviations}
\begin{thm}\label{thpgd}
Let $p,q,p_n,q_n\in\BBn^{*}$ such that $p_n/q_n \in R_{X^{(n)}} \to p/q \in R_X$. 
Assume that
\begin{enumerate}
\item the function $\psi_{X,Y}$ is essentially smooth, and let $\tau$ be the unique real such
that $\psi'_{X}(\tau)=p/q$,
\item  $\mbox{dom} \psi_{Y}=\mbox{dom} \psi_{Y^{(n)}}=\R$, \label{hypdom}
\item  \label{hyptech}there exists $r>0$ such that
$I_{\tau}:=[\tau-r,\tau+r]\subset \left( \mbox{dom}\;\psi_{X} \right) \cap
  \left( \cap_{n \geq 1}\mbox{dom}\;\psi_{X^{(n)}}\right)$ and
\begin{equation}\label{hyp1}
\forall \, u \in \R\, , \forall \, s \in I_{\tau} \, , \; \sup_{t \in \R}\left|\E \left[ e^{    (it+s)X^{(n)}+uY^{(n)}   }-e^{
(it+s)X+uY    }\right] \right|\to 0 \, .
\end{equation}
\end{enumerate}
Then the distribution of $(T_n/nq_n)$ conditioned by the event $\{S_{n}=np_n\}$ satisfies a LDP
with good rate function $ \psi^{*}_{X,Y}(p/q,\cdot)-\psi^{*}_{X}(p/q)$ and
speed $(nq_n)^{-1}$.
\end{thm}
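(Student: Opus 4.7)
The plan is to apply the G\"artner-Ellis corollary (Corollary~\ref{gel}) at speed $a_n = 1/(nq_n)$ to $Z_n := T_n/(nq_n)$ under the conditional law $\mathcal{L}_n$. For this I must compute, for every $\theta\in\R$, the limit
$$\Lambda(\theta) := \lim_n \frac{1}{nq_n} \log \E\!\left[ e^{\theta T_n} \,\big|\, S_n = np_n \right],$$
then verify its essential smoothness, lower semi-continuity and that $0 \in \Int \mbox{dom}\,\Lambda$, and finally identify $\Lambda^*$ by convex duality.

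\emph{Representation.} Using Bartlett's Fourier-inversion formula (Lemma~\ref{lfourfour}) I write
$$\E\!\left[ e^{\theta T_n} \,\big|\, S_n = np_n \right] = \frac{N_n(\theta)}{N_n(0)}, \qquad N_n(\theta) := \frac{1}{2\pi}\int_{-\pi}^{\pi} e^{nq_n \psi_n(it,\theta) - itnp_n}\, dt,$$
with $\psi_n(\alpha,\beta) := \log \E[\exp(\alpha X^{(n)}+\beta Y^{(n)})]$, so that $N_n(0) = \P(S_n=np_n)$. Hypothesis (2) makes $\psi_n(\cdot,\theta)$ entire in the second variable for every $\theta\in\R$, and Hypothesis (3) provides analyticity in the first variable on the strip $\{\mathrm{Re}\,\alpha \in I_\tau\}$.

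\emph{Saddle-point expansion.} For $n$ large enough and $\theta$ fixed, let $s_n(\theta) \in I_\tau$ be the unique real solution of $\partial_1 \psi_n(s,\theta) = p_n/q_n$; its existence and the convergence $s_n(\theta)\to s(\theta)$ (with $\partial_1 \psi_{X,Y}(s(\theta),\theta) = p/q$) follow from strict convexity in Hypothesis (1) and the uniform convergence of moment generating functions supplied by Hypothesis (3). Shifting the contour vertically by $-is_n(\theta)$, by analyticity on the strip and $2\pi$-periodicity, I factorise
$$N_n(\theta) = e^{nq_n[\psi_n(s_n(\theta),\theta) - s_n(\theta)\, p_n/q_n]} \; J_n(\theta),$$
where $J_n(\theta):= \frac{1}{2\pi}\int_{-\pi}^{\pi}\varphi_{n,\theta}(t)^{nq_n}e^{-itnp_n}\,dt$ and $\varphi_{n,\theta}$ is the characteristic function of the $X^{(n)}$-marginal of the joint law tilted by $(s_n(\theta),\theta)$. \emph{The main obstacle is to show that $J_n(\theta)$ is subexponential in $nq_n$, uniformly in $n$:} one splits $[-\pi,\pi]$ into a small neighbourhood of $t=0$, where a local quadratic expansion of $\psi_n(\cdot,\theta)$ at $s_n(\theta)$ gives a Gaussian-type bound, and its complement, on which Hypothesis (3), together with the span-$1$ and non-degeneracy of $X$, forces $|\varphi_{n,\theta}(t)|$ to be bounded away from $1$ uniformly in $n$.

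\emph{Identification of the rate.} Taking logarithms, dividing by $nq_n$ and subtracting the $\theta=0$ contribution (for which $s(0)=\tau$ and $\psi_X(\tau)-\tau p/q = -\psi^*_X(p/q)$) yields
$$\Lambda(\theta) = \psi_{X,Y}(s(\theta),\theta) - s(\theta)\, p/q + \psi^*_X(p/q).$$
The implicit function theorem, together with the essential smoothness of $\psi_{X,Y}$ and Hypothesis (2), makes $\Lambda$ smooth, convex, lower semi-continuous and essentially smooth on its domain, with $0 \in \Int\mbox{dom}\,\Lambda$. Corollary~\ref{gel} then delivers the LDP at speed $1/(nq_n)$ with good rate function $\Lambda^*$. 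A direct convex-duality computation
$$\Lambda^*(y) = \sup_\theta\!\Bigl[\theta y + \sup_\alpha\bigl(\alpha\,p/q - \psi_{X,Y}(\alpha,\theta)\bigr)\Bigr] - \psi^*_X(p/q) = \psi^*_{X,Y}(p/q,y) - \psi^*_X(p/q)$$
finishes the identification of the rate function in the advertised form.
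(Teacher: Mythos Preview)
Your proposal is correct and follows essentially the same route as the paper. Both proofs start from Bartlett's formula, extract the exponential rate by an exponential tilt (saddle point), control the remaining oscillatory integral by a Laplace-type estimate exploiting span~$1$, and conclude via G\"artner--Ellis and the duality computation $\Lambda^{*}(y)=\psi^{*}_{X,Y}(p/q,y)-\psi^{*}_{X}(p/q)$.

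The only differences are organisational. The paper performs the tilt in two stages---first in the $Y$-direction, producing the auxiliary integer variable $\widehat{X}_{u}$ with law $\P(\widehat{X}_{u}=k)=e^{-\psi_{Y}(u)}\E[e^{uY}\BBone_{\{X=k\}}]$, and then in the $X$-direction inside Lemma~\ref{lempertub}---whereas you tilt jointly by $(s_{n}(\theta),\theta)$ in a single contour shift; these are two descriptions of the same change of measure and give the identical factorisation. For the essential smoothness of $\Lambda$, the paper argues by convex duality (Rockafellar, Theorem~26.3: $\psi_{X,Y}$ essentially smooth $\Rightarrow$ $\psi^{*}_{X,Y}$ essentially strictly convex $\Rightarrow$ its partial section has an essentially smooth conjugate), while you invoke the implicit function theorem; since Hypothesis~(2) forces $\mbox{dom}\,\Lambda=\R$, differentiability is all that is needed and either argument suffices.
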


\subsubsection{Moderate deviations}\label{secmod}
Let $\xi$ lying in the interior of $\mbox{dom} \psi_{X}$ and consider the random
vector $(\check{X}_{\xi},\check{Y}_{\xi})$ whose distribution is given, for
any $k\in\BBn$ and real Borel set $A$, by
\begin{equation}\label{loicheck}\BBp(\check{X}_{\xi}=k,\check{Y}_{\xi}\in A)=
\exp[-\psi_{X}(\xi)+k\xi] \BBp(X=k,Y\in A).
\end{equation}
We define in the same way the random
vector $(\check{X}^{(n)}_{\xi},\check{Y}^{(n)}_{\xi})$. 
Obviously, $\check{X}_{\xi}$ has the same distribution as $X^{*,\xi^{*}}$
with $\xi^{*}=\psi_{X}'(\xi)$. Further, let $\alpha^{2}_{\xi}$ be the variance of the
residual $\check{\varepsilon}_{\xi}$
for the linear regression of $\check{Y}_{\xi}$ on $\check{X}_{\xi}$:
\begin{equation}\label{alpha2}\check{\varepsilon}_{\xi}:=\Blp\check{Y}_{\xi}-\E\blp \check{Y}_{\xi}\brp\Brp-
\frac{\mbox{cov}(\check{X}_{\xi},\check{Y}_{\xi})}{\mbox{var}(\check{X}_{\xi})}
\Blp\check{X}_{\xi}-\E\blp \check{X}_{\xi}\brp\Brp.
\end{equation}
Then we get the following result.

\begin{thm}\label{modev}
Let
$p,q,p_n,q_n\in\BBn^{*}$ such that $p/q$ (resp. $p_n/q_n$) lies in $R_X$  (resp. $R_{X^{(n)}}$) and $p_n/q_n\to p/q$. Assume that
\begin{enumerate}
\item there exists $r_0>0$ such that
$$B_0:=]-r_0,r_0[ \subset \left(
    \mbox{dom} \psi_{Y} \right) \cap \left( \cap_{n \geq 1} \mbox{dom}
    \psi_{Y^{(n)}}\right) \, ,\label{hypdom2}$$
and let $\tau$ (resp. $\tau_n$) be the unique real such that
$\psi'_{X}(\tau)=p/q$ (resp. $\psi'_{X^{(n)}}(\tau_n)=p_n/q_n$),
\item \label{hyptech2}there exists $r>0$ such that
$I_{\tau}:=[\tau-r,\tau+r] \subset \left(\mbox{dom}\;\psi_{X}\right) \cap
  \left( \cap_{n \geq 1}\mbox{dom}\;\psi_{X^{(n)}}\right)$ and  
\begin{align}
\forall \, s \in I_{\tau} \, , \; \sup_{t \in \R} &\left|\E \left[ e^{    (it+s)X^{(n)}   }-e^{(it+s)X    }\right] \right|\to 0 \, ,\label{hyp2}\\
\sup_n\sup_{(s,v)\in I_\tau\times B_0} &\E\left(
e^{sX^{(n)}+v(Y^{(n)}-\EMD)}\right)<\infty \, ,\label{hyp3}
\end{align}

\item $(a_n)$ satisfies $na_nq_n\to+\infty$. 
\end{enumerate}
Then the distribution of $\Blp \sqrt{\frac{a_n}{nq_n}}\blp T_n-nq_n\EMD\brp\Brp$ conditioned by the event $\{S_{n}=np_n\}$ satisfies a LDP
with good rate function $J(\cdot)= \frac{(\cdot)^2}{2\alpha^{2}_{\tau}}$ and
speed $a_n$. 
\end{thm}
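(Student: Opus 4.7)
The plan is to apply the G\"artner--Ellis corollary (Corollary \ref{gel}) with speed $a_n$ to the sequence
\[
Z_n := \sqrt{\tfrac{a_n}{nq_n}}\bigl(T_n - nq_n \EMD\bigr)
\]
under the conditional law $\P(\cdot \mid S_n = np_n)$. Since the prospective limit $\Lambda(\theta) = \tfrac{1}{2}\alpha_\tau^2 \theta^2$ is finite everywhere, smooth, strictly convex and lower semi-continuous, its Fenchel--Legendre transform is precisely the good rate function $J(\cdot) = (\cdot)^2/(2\alpha_\tau^2)$ of the statement. Setting $u_n := \theta/\sqrt{a_n nq_n}$, which tends to $0$ because $na_n q_n \to \infty$ (so that $u_n \in B_0 \subset \mbox{dom}\,\psi_{Y^{(n)}}$ eventually by assumption \eqref{hypdom2}), the task reduces to establishing the convergence
\[
a_n \log \E \bigl[ e^{u_n(T_n - nq_n \EMD)} \mid S_n = np_n \bigr] \;\longrightarrow\; \tfrac{1}{2}\alpha_\tau^2 \theta^2 .
\]

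The starting point is the Bartlett-type integral representation of the conditional Laplace transform (Lemma \ref{lfourfour}). After exponentially tilting the joint law of $(X^{(n)}, Y^{(n)})$ by $\tau_n$ in the $X$-coordinate (which leaves the conditional distribution invariant, while ensuring $\E \check X^{(n)}_{\tau_n} = p_n/q_n$), one obtains
\[
\E\bigl[ e^{u_n(T_n - nq_n \EMD)} \mid S_n = np_n \bigr] = \frac{\int_{-\pi}^\pi \bigl( \E[e^{it\check X^{(n)}_{\tau_n} + u_n(\check Y^{(n)}_{\tau_n} - \EMD)}] \bigr)^{nq_n} e^{-it np_n}\, dt}{\int_{-\pi}^\pi \bigl( \E[e^{it \check X^{(n)}_{\tau_n}}] \bigr)^{nq_n} e^{-itnp_n}\, dt},
\]
in which both integrands are peaked at the saddle point $t = 0$ thanks to the choice of $\tau_n$ and the centering $\EMD$.

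The heart of the argument is then a saddle-point/Laplace-method analysis of this ratio. A second-order Taylor expansion at $(t,u_n) = (0,0)$ of the tilted cumulant generating function yields, after the $(it)$-linear terms have cancelled against $-itnp_n$,
\[
-\tfrac{1}{2}nq_n t^2 \sigma^2_{\check X^{(n)}_{\tau_n}} + i t\, nq_n u_n\, \mbox{cov}(\check X^{(n)}_{\tau_n},\check Y^{(n)}_{\tau_n}) + \tfrac{1}{2}nq_n u_n^2 \sigma^2_{\check Y^{(n)}_{\tau_n}} + \text{remainder}.
\]
Performing the resulting Gaussian $t$-integration over a shrinking window around $0$, the mixed covariance term completes the square with the $t^2$ term to produce exactly the residual variance $\alpha_{\tau_n}^2$ defined in \eqref{alpha2}; the ratio therefore behaves like $\exp\bigl(\tfrac{1}{2} nq_n u_n^2 \alpha_{\tau_n}^2 (1+o(1))\bigr) = \exp\bigl(\tfrac{\theta^2 \alpha_{\tau_n}^2}{2a_n}(1+o(1))\bigr)$. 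Multiplying by $a_n$, taking logs, and using $\tau_n \to \tau$ together with continuity of $\xi \mapsto \alpha_\xi^2$ on the interior of $\mbox{dom}\,\psi_X$ gives the claimed limit.

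The main obstacle is to make this saddle-point analysis \emph{uniform} across the triangular array and fully rigorous: one must (a) truncate the oscillatory $t$-integrals to a shrinking neighbourhood of $0$ and show that the complementary contribution is exponentially negligible, (b) control the Taylor remainder of the cumulant generating function uniformly in $n$, and (c) legitimise interchanging the small-$(t,u_n)$ expansion with the passage $n \to \infty$. Hypothesis \eqref{hyp2} supplies the uniform convergence of the tilted characteristic functions in the strip $I_\tau$, which tames the tail contribution in $t$, while hypothesis \eqref{hyp3} yields uniform exponential moment bounds on $(X^{(n)}, Y^{(n)} - \EMD)$ over $I_\tau \times B_0$, permitting differentiation under the expectation and delivering a uniform bound on the Taylor remainder. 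Once these uniform estimates are in place, the hypotheses of Corollary \ref{gel} are verified and the moderate deviation principle follows.
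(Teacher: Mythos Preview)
Your proposal is correct and follows the same overall strategy as the paper: verify the G\"artner--Ellis hypotheses by writing the conditional Laplace transform via Bartlett's formula and analysing the resulting ratio of oscillatory integrals by a saddle-point/Laplace argument, using \eqref{hyp2} for the tail truncation and \eqref{hyp3} for uniform control of the remainder.

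The technical organisation differs, however. You tilt first in the $X$-direction by $\tau_n$ (passing to $(\XC,\YC)$, so that the saddle in $t$ sits at $0$), then Taylor-expand the tilted cumulant generating function jointly in $(t,u_n)$ and complete the square in the Gaussian $t$-integral to produce $\alpha_{\tau_n}^2$ directly. The paper instead tilts in the $Y$-direction by $u_n$ (passing to $\Hor$), applies Lemma~\ref{llap1} as a black box to both numerator and denominator, and is left with the difference $H_n(u_n)-H_n(0)$ of the partial Legendre transforms $H_n(h)=\sup_\xi\{\xi p_n/q_n-\psi_{X^{(n)},Y^{(n)}-\EMD}(\xi,h)\}$; a Taylor expansion of $H_n$ (with $H_n'$, $H_n''$, $H_n^{(3)}$ computed via the implicit function theorem) then yields $H_n''(0)=-\alpha_{\tau_n}^2$. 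Your route is more elementary and makes the Gaussian mechanism visible; the paper's route is more modular, since Lemma~\ref{llap1} is reused from the large-deviation proof and the uniform saddle-point estimates are already packaged inside it.
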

\begin{rem}
As $(nq_n)^{-1}=o(a_n)$, we say that the distribution of $\Blp
\sqrt{\frac{a_n}{nq_n}}\blp T_n-nq_n\EMD\brp\Brp$ conditioned by the event
$\{S_{n}=np_n\}$ satisfies a moderate deviation principle (MDP).
\end{rem}
\begin{cor} 
\label{pasencore} 
Under the assumptions of one of the last theorems, ${\mathcal{L}}_n$ converges in distribution toward's the degenerate distribution concentrated on 
$E\blp \check{Y}_{p/q}\brp$. 
\end{cor}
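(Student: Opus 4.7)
The plan is to deduce the degeneracy of the limit from the standard fact that a sequence satisfying a large (or moderate) deviation principle with a good rate function whose zero set is a singleton concentrates weakly at that singleton. I focus on the LDP of Theorem \ref{thpgd}; the MDP case is analogous and is sketched at the end.

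The main computation is locating the unique minimum of $I(y):=\psi^{*}_{X,Y}(p/q,y)-\psi^{*}_X(p/q)$. Let $\tau$ be as in Theorem \ref{thpgd} and set $y_0:=\E(\check{Y}_\tau)$. A short calculation from (\ref{loicheck}) gives $\E(\check{Y}_\tau)=\E[Y e^{\tau X}]/\E[e^{\tau X}]=\partial_t\psi_{X,Y}(\tau,0)$ and $\partial_s\psi_{X,Y}(\tau,0)=\psi'_X(\tau)=p/q$, so $(p/q,y_0)=\nabla\psi_{X,Y}(\tau,0)$. The concave function $(s,t)\mapsto s(p/q)+t y_0-\psi_{X,Y}(s,t)$ is therefore maximized at $(\tau,0)$, yielding
$$\psi^{*}_{X,Y}(p/q,y_0)=\tau(p/q)-\psi_X(\tau)=\psi^{*}_X(p/q),$$
i.e.\ $I(y_0)=0$. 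The trivial lower bound $\psi^{*}_{X,Y}(p/q,y)\geq \tau(p/q)-\psi_{X,Y}(\tau,0)=\psi^{*}_X(p/q)$ for every $y$ shows $I\geq 0$. The main obstacle is uniqueness of the minimum, since $\psi_{X,Y}$ need not be strictly convex (it degenerates, e.g., when $Y$ is a deterministic function of $X$). One bypasses this via Rockafellar's duality (Theorem 26.3 of \emph{Convex Analysis}): essential smoothness of $\psi_{X,Y}$ (assumption 1 of Theorem \ref{thpgd}) is equivalent to essential strict convexity of $\psi^{*}_{X,Y}$, which makes $\psi^{*}_{X,Y}(p/q,\cdot)$ strictly convex on the set where it is subdifferentiable (a set containing $y_0$, since $(\tau,0)\in\partial\psi^{*}_{X,Y}(p/q,y_0)$), ruling out a second minimizer.

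With $y_0$ the unique zero of the good rate function $I$, the standard LDP concentration argument concludes: for any open neighborhood $V$ of $y_0$, goodness of $I$ yields $m:=\inf_{V^c}I>0$, so the LDP upper bound of Theorem \ref{thpgd} gives
$$\limsup_{n\to\infty}(nq_n)^{-1}\log\mathcal{L}_n(V^c)\leq -m<0,$$
hence $\mathcal{L}_n(V^c)\to 0$ and $\mathcal{L}_n\Rightarrow\delta_{y_0}$. Starting from Theorem \ref{modev} instead, one first checks $\EMD\to\E(\check{Y}_\tau)$ (using hypotheses (\ref{hyp2})--(\ref{hyp3}) via weak convergence of the tilted laws plus uniform integrability), and then writing $Z_n:=\sqrt{a_n/(nq_n)}(T_n-nq_n\EMD)$ applies the MDP upper bound to $\{|Z_n|>\epsilon\sqrt{nq_n a_n}/2\}$ to obtain a conditional probability bounded eventually by $\exp(-\epsilon^2 nq_n/(8\alpha^{2}_{\tau}))\to 0$, which yields the same weak limit.
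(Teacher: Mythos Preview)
The paper states Corollary~\ref{pasencore} without proof, treating it as an immediate consequence of the deviation principles. Your argument supplies precisely the standard details one would expect: the good rate function $I(y)=\psi^{*}_{X,Y}(p/q,y)-\psi^{*}_{X}(p/q)$ vanishes uniquely at $y_0=\E(\check{Y}_\tau)$, and the LDP upper bound then forces $\mathcal{L}_n(V^c)\to 0$ for every neighborhood $V$ of $y_0$. Your identification of $y_0$ via $\nabla\psi_{X,Y}(\tau,0)=(p/q,y_0)$ is correct and is the only substantive computation; the appeal to Rockafellar's Theorem~26.3 (essential smoothness of $\psi_{X,Y}$ $\Leftrightarrow$ essential strict convexity of $\psi^{*}_{X,Y}$) to secure uniqueness mirrors exactly the device the paper already uses at the end of the proof of Theorem~\ref{thpgd}.

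One small point of rigor in your MDP sketch: the bound $\exp(-\epsilon^2 nq_n/(8\alpha^2_\tau))$ comes from applying the LDP upper bound to the $n$-dependent set $\{|z|>(\epsilon/2)\sqrt{nq_n a_n}\}$, which the LDP does not directly license. The clean fix is trivial: for any fixed $M>0$, eventually $(\epsilon/2)\sqrt{nq_n a_n}>M$, so $\P(|Z_n|>(\epsilon/2)\sqrt{nq_n a_n})\le\P(|Z_n|>M)\to 0$ by the MDP. This does not affect the correctness of your conclusion.
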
 
\section{Proofs}
\label{spro}
For $p_n \in \N$, such that $\P(S_n=np_n)\neq 0$, let
$$f_{n}(u):=\frac1{nq_n}\log\BBe\left[\exp(u T_{n})\left|S_{n}=np_n\right.\right]
\in\BBr\cup\{+\infty\}.$$
In order to apply G\"artner-Ellis Corollary, we have to prove that $f_n(u)$
converges when $n\to\infty$. The next two subsections yield a simple representation of $f_n(u)$ using the Fourier Transform.
\subsection{A simple representation using Fourier Transform}
Recall that we set $\varphi_Z(t):=\E\left(e^{itZ}\right)$.
\label{sufour}
An obvious but useful lemma follows.
\begin{lem}[Bartlett's Formula, see Equation (16) in \cite{Bar1938}]
\label{lfourfour}
Let $Z$ be a non negative integer r.v. and $W$ be an integrable r.v. Then, for
any non negative integer $k$ lying in the support of $Z$,
$$\BBe\left[W|Z=k\right]=\frac{\int_{-\pi}^{\pi}\BBe[W\exp(itZ)]\exp(-ikt)dt}
{\int_{-\pi}^{\pi}\varphi_{Z}(t)\exp(-ikt)dt} \, .
$$  
\end{lem}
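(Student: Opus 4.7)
The plan is to reduce the statement to the classical Fourier inversion formula for integer-valued random variables, applied separately to the numerator and the denominator.

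First I would recall the pointwise identity
\[
\frac{1}{2\pi}\int_{-\pi}^{\pi}e^{it(Z-k)}\,dt = \ind{Z=k},
\]
valid because $Z$ and $k$ are integers (so the integral equals $1$ if $Z=k$ and $0$ otherwise). Choosing $W\equiv 1$ and taking expectations already gives
\[
\P(Z=k)=\frac{1}{2\pi}\int_{-\pi}^{\pi}\varphi_{Z}(t)e^{-ikt}\,dt,
\]
which is the Fourier formula for the denominator (up to the factor $\frac{1}{2\pi}$ that will cancel).

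Next, for the numerator, since $|W e^{it(Z-k)}|=|W|$ is jointly integrable with respect to $\P\otimes dt$ on $\Omega\times[-\pi,\pi]$ (using the integrability of $W$ and the boundedness of the torus), Fubini's theorem yields
\[
\E\blp W\ind{Z=k}\brp
=\frac{1}{2\pi}\int_{-\pi}^{\pi}\E\blp W e^{itZ}\brp e^{-ikt}\,dt.
\]
Since $k$ lies in the support of $Z$, one has $\P(Z=k)>0$, so the conditional expectation is defined by
\[
\E\blp W\mid Z=k\brp=\frac{\E\blp W\ind{Z=k}\brp}{\P(Z=k)},
\]
and dividing the two preceding displays gives the announced formula (the $\frac{1}{2\pi}$ factors cancel).

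There is essentially no obstacle: the only point worth checking is the applicability of Fubini, which is immediate from $|W e^{it(Z-k)}|=|W|\in L^{1}(\P)$ and the finiteness of the torus. The lattice span assumption $m_{Z}=1$ is not needed here; what is used is only that $Z$ takes integer values and $k\in\Z$, so that $e^{it(Z-k)}$ integrates to the indicator of $\{Z=k\}$ over $[-\pi,\pi]$.
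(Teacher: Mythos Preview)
Your proof is correct. The paper does not actually give a proof of this lemma: it introduces it as ``an obvious but useful lemma'' and cites Bartlett \cite{Bar1938}, so there is nothing to compare against beyond noting that your argument---Fourier inversion on the torus plus Fubini---is precisely the standard computation the authors have in mind.
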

\subsection{Laplace lemmas}
\label{sulapiid}
We begin this section with a variation on a Lemma first due to Laplace, see \cite{Hald98} . 
To be self
contained we give also the sketch of its proof.
\begin{lem}\label{laplace1}
Let $p_n,q_n,p,q\in\N^*$. Let $Z$ be a non constant square integrable non negative integer r.v. with span $m_Z=1$. Let
$(Z^{(n)})$ be a sequence  of non negative i.i.d. integer random variables also having  span $1$.  
Let $Z_{1}^{(n)},\ldots,Z_{n}^{(n)}$ be an i.i.d. sample
distributed as $Z^{(n)}$. Assume that
\begin{enumerate}
\item  $\|\varphi_{Z^{(n)}}-\varphi_{Z}\|_{\infty} \xrightarrow[n
  \rightarrow + \infty]{}0$,\label{hyplem1}
\item \label{hyplem2}the means of $Z$ and $Z^{(n)}$ are  rational,
equal respectively  to $p/q$ and $p_n/q_n$ with $p_n/q_n\xrightarrow[n
  \rightarrow + \infty]{} p/q$,
\item  \label{hyplem3}$\sigma_{Z^{(n)}}^2 \xrightarrow[n \rightarrow + \infty]{} \sigma_Z^2$,
\item \label{hyplem4}
$\E\blp|Z^{(n)}-p_n/q_n|^3\brp$ is uniformly bounded.
\end{enumerate}
Then, when $n$ tends to infinity
\begin{equation}
\label{lala}
\BBp\left(\sum_{j=1}^{nq_n}Z_{j}^{(n)}=np_n\right)=\frac{1}{\sqrt{2\pi nq_n}\sigma_{Z}}(1+o(1)).
\end{equation}
\end{lem}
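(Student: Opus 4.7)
The plan is to establish a local limit theorem for the triangular array by direct Fourier inversion. Since $Z^{(n)}$ is integer-valued with span one,
$$\BBp\Blp\sum_{j=1}^{nq_n}Z_{j}^{(n)}=np_n\Brp=\usdp\ipp\varphi_{Z^{(n)}}(t)^{nq_n}e^{-inp_n t}\,dt.$$
Setting $\mu_n:=p_n/q_n$ and using $\varphi_{Z^{(n)}}(t)e^{-i\mu_n t}=\varphi_{Z^{(n)}-\mu_n}(t)$ (since $np_n=nq_n\mu_n$), then rescaling $u=\sqrt{nq_n}\,t$, the claim reduces to
$$\int_{-\pi\sqrt{nq_n}}^{\pi\sqrt{nq_n}}\blp\varphi_{Z^{(n)}-\mu_n}(u/\sqrt{nq_n})\brp^{nq_n}\,du\;\longrightarrow\;\sqrt{2\pi}/\sigma_{Z}.$$
I would split the integration domain into three pieces, with thresholds $A>0$ (eventually sent to $+\infty$) and $\delta>0$ small: a central piece $|u|\leq A$, an intermediate annulus $A<|u|\leq\delta\sqrt{nq_n}$, and a tail $\delta\sqrt{nq_n}<|u|\leq\pi\sqrt{nq_n}$.

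On the central piece, a third-order Taylor expansion
$\log\varphi_{Z^{(n)}-\mu_n}(t)=-\tfrac12\sigma_{Z^{(n)}}^2 t^2+R_n(t)$ with $|R_n(t)|\leq C|t|^3$, the constant $C$ being uniform in $n$ thanks to Hypothesis~\ref{hyplem4}, yields the pointwise convergence $nq_n\log\varphi_{Z^{(n)}-\mu_n}(u/\sqrt{nq_n})\to -\tfrac12\sigma_Z^2 u^2$ using Hypothesis~\ref{hyplem3}; dominated convergence then gives the truncated Gaussian limit. On the intermediate annulus, the same expansion with $\delta$ chosen so that $C\delta\leq\sigma_Z^2/4$ produces the uniform bound $|\varphi_{Z^{(n)}-\mu_n}(t)|\leq\exp(-\sigma_Z^2 t^2/4)$ for $n$ large, and the contribution is majorized by $\int_{|u|>A}e^{-\sigma_Z^2 u^2/4}\,du$, which vanishes as $A\to+\infty$.

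The main obstacle is the tail $\delta\leq|t|\leq\pi$, where Taylor expansion yields no information and one must exclude the possibility that $|\varphi_{Z^{(n)}}(t)|$ approaches $1$. This is precisely where Hypothesis~\ref{hyplem1} together with the span-one assumption enter: since $Z$ is non-constant with span one, $|\varphi_Z(t)|<1$ for every $t\in[-\pi,\pi]\setminus\{0\}$, and by compactness $\rho:=\sup_{\delta\leq|t|\leq\pi}|\varphi_Z(t)|<1$. The uniform convergence $\|\varphi_{Z^{(n)}}-\varphi_Z\|_\infty\to 0$ then promotes this into $\sup_{\delta\leq|t|\leq\pi}|\varphi_{Z^{(n)}}(t)|\leq(1+\rho)/2$ for $n$ large enough, so the tail contribution is bounded by $2\pi\sqrt{nq_n}\cdot\blp(1+\rho)/2\brp^{nq_n}$, which is exponentially negligible. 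Gluing the three estimates and restoring the prefactor $1/(2\pi\sqrt{nq_n})$ yields \eqref{lala}.
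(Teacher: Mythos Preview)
Your argument is correct and follows essentially the same route as the paper's proof: Fourier inversion, a Taylor expansion controlled by the uniform third-moment bound to dominate the integrand by a Gaussian near the origin, and the span-one assumption together with the uniform convergence $\|\varphi_{Z^{(n)}}-\varphi_Z\|_\infty\to 0$ to make the region $\delta\le|t|\le\pi$ exponentially negligible. The only cosmetic differences are that the paper splits the integral into two pieces (applying dominated convergence directly on the entire region $|t|<\delta$ after rescaling, with $e^{-\sigma_Z^2 u^2/4}$ as dominating function) rather than your three, and Taylor-expands $\varphi$ itself rather than $\log\varphi$; neither changes the substance.
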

\begin{proof}
The inversion of the Fourier Transform yields
\begin{equation}
\BBp\left(\sum_{j=1}^{nq_n}Z_{j}^{(n)}=np_n\right)=\ipp
e^{-inp_nt}\varphi_{Z^{(n)}}^{nq_n}(t)\frac{dt}{2\pi}
=\ipp
\left[e^{-i\frac{p_n}{q_n}t}\varphi_{Z^{(n)}}(t)\right]^{nq_n}\frac{dt}{2\pi}.
\label{foufou}
\end{equation}
On one hand, using a Taylor expansion of order $2$ for
$e^{-i\frac{p_n}{q_n}t}\varphi_{Z^{(n)}}(t)=\varphi_{Z^{(n)}-\frac{p_n}{q_n}}(t)$,
we get
$$
\varphi_{Z^{(n)}-\frac{p_n}{q_n}}(t)=1-\frac{t^2}2\sigma^2_n-\frac{t^3}6\E\blp i(Z^{(n)}-p_n/q_n)^3e^{it^*Z^{(n)}}\brp,
$$
where $t^*$ lies in $[0,t]$ and $\sigma^2_n$ states for $\sigma^2_{Z^{(n)}}$.
Now as $\E|(Z^{(n)}-p_n/q_n)|^3$ is bounded and as $\sigma^2_n\to\sigma^2_Z$ we can find  a
positive number  (independent of $n$) $\delta<\pi$ such that for $|t|<\delta$ and for $n$ large enough 
\begin{equation}
\left|e^{-i\frac{p_n}{q_n}t}\varphi_{Z^{(n)}}(t)\right|=\left|\varphi_{Z^{(n)}-\frac{p_n}{q_n}}(t)\right|
\leq 1-\frac{\sigma_Z^{2}t^{2}}{4}.
\label{tata}
\end{equation}
On the other hand, as $m_{Z^{(n)}}=m_Z=1$  one has both 
$\xi:=\sup_{\delta\leq t\leq \pi}|\varphi_Z(t)|<1$ 
and
$\xi_n:=\sup_{\delta\leq t\leq \pi}|\varphi_{Z^{(n)}}(t)|<1$. 
Further, as
$\|\varphi_{Z^{(n)}}-\varphi_{Z}\|_{\infty}\to0$, we get $\xi_n\to\xi$. Let
$\epsilon>0$ be such that $\xi+\epsilon<1$. For $n$ large enough $\xi_n\leq \xi+\epsilon$. Now to conclude,
one splits the integral in (\ref{foufou}) in two integrals $I_{1}$, $I_{2}$ integrating
on $|t|<\delta$  and on $|t|\geq\delta$. $|I_{2}|$ is bounded by
$\blp\xi+\epsilon\brp^{nq_n}/(2 \pi)$, hence is exponentially small. To deal with $I_{1}$, one
performs the variable change $u=\sqrt{nq_n}\sigma_{Z}t$, and use both
(\ref{tata}) and inequality $\log(1-\theta)\leq-\theta, (\theta\in[0,1[)$  to conclude
 by using both central limit and Lebesgue Theorems.\QED
\end{proof}
\begin{rem}
Note that if $m_Z\not=1$, the theorem above is in general not valid. Take for example $m_Z=2$ then in the interval $[0,2\pi[,\ $ $\varphi_{Z^{(n)}}(t)=\varphi_Z(t)=1$ if and only if $t=0$ or $t=\pi$.
Obviously,
$$\usdp\ipp
e^{-inp_nt}\varphi_{Z^{(n)}}^{nq_n}(t)dt=\usdp\int_{-\pi/2}^{3\pi/2}
e^{-inp_nt}\varphi_{Z^{(n)}}^{nq_n}(t)dt . $$
Hence
\begin{equation}\label{foufou2}
\BBp\left(\sum_{j=1}^{nq_n}Z_{j}^{(n)}=np_n\right)=\usdp\int_{-\pi/2}^{3\pi/2}
e^{-inp_nt}\varphi_{Z^{(n)}}^{nq_n}(t)dt.
\end{equation}
Set $f_n(t):=e^{-inp_nt}\varphi_{Z^{(n)}}^{nq_n}(t)dt$.
Now, we split the integral in the right hand side of equation (\ref{foufou2}) in five parts
\begin{align*}
\usdp\int_{-\pi/2}^{3\pi/2}f_n(t)dt=&
\usdp\int_{-\pi/2}^{-\delta}f_n(t)dt+
\usdp\int_{-\delta}^{\delta}f_n(t)dt+
\usdp\int_{\delta}^{\pi-\delta}f_n(t)dt\\
&+\usdp\int_{\pi-\delta}^{\pi+\delta}f_n(t)dt+
\usdp\int_{\pi+\delta}^{3\pi/2}f_n(t)dt.\\
&=I_1+I_2+I_3+I_4+I_5.
\end{align*}
Using the same arguments as above, we can prove that $I_1,\ I_3$ and $I_5$ are exponentially small. We also get that $\usdp\int_{-\delta}^{\delta}f_n(t)dt=\frac1{\sqrt{2\pi nq_n}\sigma_Z}(1+o(1))$. Let us deal now with $I_4=\usdp\int_{\pi-\delta}^{\pi+\delta}f_n(t)dt$. There exist   non negative integer valued  random variables $Y_n$ such that $Z^{(n)}=2Y_n+b$, set $u=t-\pi$
$$
I_4=\frac1{2\pi}(-1)^{np_n+bnq_n}\int_{-\delta}^\delta e^{-inp_nu}\varphi_{Y_n}^{nq_n}(2u)du.
$$ 
Hence $I_4\not=\frac1{\sqrt{2\pi nq_n}\sigma_Z}(1+o(1))$.
\end{rem}

We now give an extension of the previous lemma involving not only the
probability for the sum to be equal to the mean of $Z^{(n)}$ but to any {\it
  good} rational number.
\begin{lem}\label{lempertub}
Let $Z$ be a non negative and non degenerated integer r.v. with span $m_Z=1$. Let
$(Z_j^{(n)})_j$ be a sequence  of i.i.d. non negative integer random
variables having also span $1$. Let $p_n,q_n,p,q\in\BBn^{*}$ such that
$p_n/q_n\in R_{Z^{(n)}}\to p/q\in R_{Z}$.
Let $\tau$ (resp. $\tau_n$) be the unique real such that
  $\psi_Z'(\tau)=p/q$ (resp. $\psi_{Z^{(n)}}'(\tau_n)=p_n/q_n$).
We make the following assumptions.
\begin{enumerate}
\item  There exists $r>0$ such that $I_{\tau}:=[\tau-r,\tau+r] \subset
  \left(\cap_{n \geq 1}\mbox{dom}\;\psi_{Z^{(n)}}\right) \cap \left(\mbox{dom}\;\psi_{Z}\right)$.   
\item 
\begin{equation}\label{cvunif}
\forall \, s \in I_{\tau} \, , \; \lim_n \, \sup_{t\in \R} \left| \E \left[ \blp e^{(it+s)
Z^{(n)}}-e^{(it+s) Z}\brp\right]\right|=0 \, .
\end{equation}
\end{enumerate}
Then, when $n$ goes to
infinity
\begin{equation}
\label{la1la1}
\BBp\left(\sum_{j=1}^{nq_n}Z_{j}^{(n)}=np_n\right)=e^{-nq_n\psi^{*}_{Z^{(n)}}(p_n/q_n)}\frac{1}{\sqrt{2\pi nq_n}\sigma_{Z^{*,p/q}}}(1+o(1)),
\end{equation}
\label{llap1}
where $\sigma^2_{Z^{*,p/q}}$ is the variance of $Z^{*,p/q}$ defined in (\ref{clementine}).
\end{lem}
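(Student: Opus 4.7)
The natural strategy is to reduce to Lemma \ref{laplace1} via an Esscher exponential change of measure. Let $\tilde{Z}^{(n)}:=(Z^{(n)})^{*,p_n/q_n}$ and $\tilde{Z}:=Z^{*,p/q}$ be the tilted variables given by (\ref{clementine}), using the tilting parameters $\tau_n$ and $\tau$. Both tilted laws preserve the supports of $Z^{(n)}$ and $Z$, so both still have span $1$, and by construction $\E(\tilde{Z}^{(n)})=p_n/q_n$, $\E(\tilde{Z})=p/q$. Applying (\ref{clementine}) to the i.i.d. sample and summing on configurations yields
\begin{equation*}
\P\Blp \sum_{j=1}^{nq_n} Z_j^{(n)}=np_n \Brp = e^{-nq_n\blp \tau_n p_n/q_n-\psi_{Z^{(n)}}(\tau_n)\brp} \P\Blp \sum_{j=1}^{nq_n} \tilde{Z}_j^{(n)}=np_n \Brp = e^{-nq_n \psi_{Z^{(n)}}^{*}(p_n/q_n)} \P\Blp \sum_{j=1}^{nq_n} \tilde{Z}_j^{(n)}=np_n \Brp,
\end{equation*}
using the Fenchel identity $\psi_{Z^{(n)}}^{*}(p_n/q_n)=\tau_n p_n/q_n-\psi_{Z^{(n)}}(\tau_n)$. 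It therefore suffices to apply Lemma \ref{laplace1} to the sequence $(\tilde{Z}^{(n)})$, whose means are $p_n/q_n\to p/q$, to conclude that the probability on the right is equivalent to $(2\pi nq_n\sigma_{\tilde{Z}}^{2})^{-1/2}$.

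Next I verify each hypothesis of Lemma \ref{laplace1}. Taking $t=0$ in (\ref{cvunif}) gives $\psi_{Z^{(n)}}(s)\to\psi_{Z}(s)$ pointwise on $I_\tau$; standard convex analysis then upgrades this to uniform convergence on compact subsets of $\Int I_\tau$ and convergence of derivatives at interior points where the limit is differentiable. Together with strict convexity of $\psi_Z$ (since $Z$ is non-degenerate), this forces $\tau_n\to\tau$ and $\sigma_{\tilde{Z}^{(n)}}^{2}=\psi_{Z^{(n)}}''(\tau_n)\to\psi_{Z}''(\tau)=\sigma_{\tilde{Z}}^{2}$. The uniform bound on third absolute central moments is obtained similarly: $\E|\tilde{Z}^{(n)}-p_n/q_n|^{3}$ is controlled by quantities of the form $e^{-\psi_{Z^{(n)}}(\tau_n)}\E[(Z^{(n)})^{k}e^{\tau_n Z^{(n)}}]$ with $k\leq 3$, which remain bounded uniformly in $n$ because $\tau_n$ stays in a compact subset of $\Int I_\tau$ and the relevant derivatives of $\psi_{Z^{(n)}}$ converge.

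The main technical hurdle is to establish $\|\varphi_{\tilde{Z}^{(n)}}-\varphi_{\tilde{Z}}\|_{\infty}\to 0$, since hypothesis (\ref{cvunif}) is pointwise in $s$ while the tilting parameter $\tau_n$ depends on $n$. Starting from
\begin{equation*}
\varphi_{\tilde{Z}^{(n)}}(t)=e^{-\psi_{Z^{(n)}}(\tau_n)}\E\blp e^{(it+\tau_n)Z^{(n)}}\brp,\qquad \varphi_{\tilde{Z}}(t)=e^{-\psi_{Z}(\tau)}\E\blp e^{(it+\tau)Z}\brp,
\end{equation*}
I would split the difference by a telescoping argument, inserting and subtracting $\E[e^{(it+\tau)Z^{(n)}}]$ and $\E[e^{(it+\tau)Z}]$. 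The central piece $\sup_{t}\blp \E[e^{(it+\tau)Z^{(n)}}-e^{(it+\tau)Z}]\brp$ vanishes directly by (\ref{cvunif}) applied at $s=\tau$. The remaining terms are controlled via the mean value estimate $|e^{(it+\tau_n)z}-e^{(it+\tau)z}|\leq |\tau_n-\tau|\,z\,e^{(\tau+r/2)z}$ (valid for $n$ large so that $|\tau_n-\tau|\leq r/2$); the resulting expectations are uniformly bounded because $\tau+r/2\in I_\tau$ and $\E[Z^{(n)}e^{(\tau+r/2)Z^{(n)}}]=\psi'_{Z^{(n)}}(\tau+r/2)e^{\psi_{Z^{(n)}}(\tau+r/2)}$ converges to the analogous quantity for $Z$. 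Finally the prefactor difference $e^{-\psi_{Z^{(n)}}(\tau_n)}-e^{-\psi_Z(\tau)}$ tends to $0$ by continuity. Combining, Lemma \ref{laplace1} applies to $(\tilde{Z}^{(n)})$ and produces (\ref{la1la1}) after multiplication by the exponential prefactor above. \QED
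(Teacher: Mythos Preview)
Your proof is correct and follows essentially the same route as the paper: both perform the Esscher tilt at $\tau_n$ to write $\P(\sum Z_j^{(n)}=np_n)=e^{-nq_n\psi^*_{Z^{(n)}}(p_n/q_n)}\P(\sum (Z^{(n)})^{*,p_n/q_n}_j=np_n)$ and then verify the hypotheses of Lemma~\ref{laplace1} for the tilted variables. The only cosmetic differences are that the paper bounds the third absolute moment via H\"older (passing through a fourth moment) rather than your direct raw-moment domination, and it dismisses the step $\|\varphi_{\tilde Z^{(n)}}-\varphi_{\tilde Z}\|_\infty\to 0$ with ``similar arguments'' where you supply an explicit telescoping.
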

\begin{proof}
Using the multinomial formula, we may write
$$
\BBp\left(\sum_{j=1}^{nq_n}Z_{j}^{(n),*,p_n/q_n}=np_n\right)=e^{ nq_n(\frac{p_n}{q_n}\tau_n-\psi_{Z^{(n)}}(\tau_n))}\BBp\left(\sum_{j=1}^{nq_n}Z^{(n)}_{j}=np_n\right),
$$
where $\tau_n$ is the unique real such that $\psi'_{Z^{(\tau_n)}}=p_n/q_n$.
Hence
$$\BBp\left(\sum_{j=1}^{nq_n}Z^{(n)}_{j}=np_n\right)=e^{-nq_n\psi^{*}_{Z^{(n)}}(p_n/q_n)}\BBp\left(\sum_{j=1}^{nq_n}Z_{j}^{(n),*,p_n/q_n}=np_n\right),$$
where $Z_{1}^{(n),*,p_n/q_n},\ldots, Z_{n}^{(n),*,p/q}$ are i.i.d. r.vs.
having the distribution defined by 
$$
\P\blp  Z^{(n),*,p_n/q_n}=k\brp=\exp\blp k\tau_n-\psi_{Z_{1}^{(n)}}(\tau_n)\brp\P\blp Z_{1}^{(n)}=k\brp.
$$
Further, the expectation of this last r.v. is $p_n/q_n$. Let us now check the assumptions of Lemma \ref{laplace1}.
\begin{itemize}
\item Assumption \ref{hyplem2} of Lemma \ref{laplace1} is satisfied by construction of $Z^{(n),*,p_n/q_n}$.
\item Let us  prove that $ 
\E\left|Z^{(n),*,p_n/q_n}-p_n/q_n\right|^3
$
is bounded.
Using H\"older inequality we get that
\begin{align*} 
\E\left|Z^{(n),*,p_n/q_n}-p_n/q_n\right|^3&=\sum_{k=0}^\infty\left|k-\frac{p_n}{q_n}\right|^3e^{k\tau_n-\psi_{Z^{(n)}}(\tau_n)}\P\blp Z_{1}^{(n)}=k\brp\\
&=e^{\psi^*_{Z^{(n)}}(p_n/q_n)}\E\left(\left|Z^{(n)}-\frac{p_n}{q_n}\right|^3e^{\tau_n(Z^{(n)}-p_n/q_n)}
\right)\\
&\leq e^{\psi^*_{Z^{(n)}}(\frac{p_n}{q_n})}\left(\E\left((Z^{(n)}-\frac{p_n}{q_n})^4e^{\tau_n(Z^{(n)}-\frac{p_n}{q_n})}
\right)\right)^{3/4}\left(\E\left(e^{\tau_n(Z^{(n)}-\frac{p_n}{q_n})}
\right)\right)^{1/4}.
\end{align*}
Using classical arguments on convex functions \cite{Roc72}, we get that $\tau_n
\xrightarrow[n \rightarrow + \infty]{} \tau$.
Hence, by Assumptions 1. and 2. of Lemma \ref{lempertub}
we get that
$ 
\E\left|Z^{(n),*,p/q}-p/q\right|^3
$
is bounded.
\item  Similar arguments yield that
\begin{equation}\label{cvunif2}
\|\varphi_{Z_1^{(n),*,p_n/q_n}}-\varphi_{Z^{*,p/q}}\|\to0
\end{equation}
and that
$\sigma_{Z_{n}^*}^2 \rightarrow \sigma_{Z^*}^2$.
\end{itemize}
Hence all the assumptions of Lemma \ref{laplace1} are satisfied and we may
conclude using Lemma \ref{laplace1}.\QED
\end{proof}
\subsection{Some changes of probability}\label{newprob}
One of the main tool to prove large deviation results is the use of changes of
probability. In this section, we review the different changes of probability
used in this paper.
\begin{enumerate}
\item[(a)] \label{change1} Let $p/q\in R_{Z}$. We define
  $\tau\in\mbox{dom}\;\psi_{Z}$ by $\psi_{Z}^{\prime}(\tau)=p/q$. We then
  introduce $Z^{*,p/q}$ as a random variable valued 
on $\BBn$:
\begin{equation}
\BBp(Z^{*,p/q}=k)=\exp[k\tau-\psi_{Z}(\tau)]\BBp(Z=k),\;\;
(k\in\BBn).
\end{equation}
We have $\mathbb{E}(Z^{*,p/q})=\frac{p}{q}$. This change of
probability is quite classical in large deviation theory. In order to prove
Lemma \ref{lempertub}, we also define $Z^{(n),*,p_n/q_n}$, replacing $Z$ by
$Z^{(n)}$, $p/q$ by $p_n/q_n$ and $\tau$ by $\tau_n$. Then $\E\blp Z^{(n),*,p_n/q_n}\brp=p_n/q_n$, as needed to apply Lemma \ref{laplace1}. 
\item[(b)] \label{change2}
For $u$ in $\mbox{dom}\psi_Y$, define
 $\widehat{X}_{u}$ by
\begin{equation}\label{newchap}
\BBp\left(\widehat{X_{u}}=k\right)=
\exp\left[-\psi_{Y}\blp u\brp\right]\BBe\left[\exp\blp u Y\brp\BBone_{\{X=k\}}\right].
\end{equation}
Similarly, replacing $(X,Y)$ by $\left(X^{(n)},Y^{(n)}\right)$, we define $\widehat{X}_u^{(n)}$. The r.v. $\widehat{X_{u}}$ and $\widehat{X}_u^{(n)}$ appear naturally when applying the inversion of Fourier transform in the proof of Theorem \ref{thpgd}.
\item[(c)] \label{change3}For the moderate deviations, the asymptotic is
  different (see Theorem \ref{modev}). Therefore the r.v.
  $Y^{(n)}$ have to be centered. The centering factor and the rate function
  are closely related to the following change of probability.
Let $\xi$ lying in the interior of $\mbox{dom} \psi_{X}$ and consider the random
vector $(\check{X}_{\xi},\check{Y}_{\xi})$ whose distribution is given, for
any $k\in\BBn$ and real Borel set $A$, by
\begin{equation}\BBp(\check{X}_{\xi}=k,\check{Y}_{\xi}\in A)=
\exp[-\psi_{X}(\xi)+k\xi] \BBp(X=k,Y\in A).
\end{equation}
We define in the same way the random
vector $(\check{X}^{(n)}_{\xi},\check{Y}^{(n)}_{\xi})$. 
Obviously, $\check{X}_{\xi}$ has the same distribution as $X^{*,\xi^{*}}$
with $\xi^{*}=\psi_{X}'(\xi)$. Further, let $\alpha^{2}_{\xi}$ be the variance of the
residual $\check{\varepsilon}_{\xi}$
for the linear regression of $\check{Y}_{\xi}$ on $\check{X}_{\xi}$:
\begin{equation}\check{\varepsilon}_{\xi}:=\Blp\check{Y}_{\xi}-\E\blp \check{Y}_{\xi}\brp\Brp-
\frac{\mbox{cov}(\check{X}_{\xi},\check{Y}_{\xi})}{\mbox{var}(\check{X}_{\xi})}
\Blp\check{X}_{\xi}-\E\blp \check{X}_{\xi}\brp\Brp.
\end{equation}
Note that $J(y)=\frac{y^2}{\alpha^2_\tau}$ is the rate function in Theorem
\ref{modev}. Moreover, the centering factor is $\EMD$. Hence, the change of
probability used in the proof of Theorem \ref{thpgd} (see change of
probability (\ref{newchap}) above) has to be modified, according to this
centering factor. This leads to the change of probability (\ref{horreur}) below. 
\item[(d)] \label{change4}
Let $\tau$ (resp. $\tau_n$) be such that $\psi_{X}'(\tau)=p/q$ (resp. $\psi_{X^{(n)}}'(\tau_n)=p_n/q_n$). Define 
 the random variable $\Hor$ distributed on $\N$ by:

\begin{equation}\label{horreur}\BBp\left(\Hor=k\right)=
e^{-\psi_{\widetilde{Y}^{(n)}}( u_n)}\BBe\left[e^{\frac{u \widetilde{Y}^{(n)}}{\sqrt{na_nq_n}}}\BBone_{\{X^{(n)}=k\}}\right],\end{equation}
where $\widetilde{Y}^{(n)}=Y^{(n)}-\EMD$, $u_n=u/\sqrt{na_nq_n}$. 
\end{enumerate}
\subsection{Proof of Theorem \ref{thpgd}}
Let, for $t\in\BBr$ and $u\in\R$,
$$\Phi_{X^{(n)},Y^{(n)}}(t,u):=\BBe\left(\exp[itX^{(n)}+u Y^{(n)}]\right).$$
On one hand, using Lemma \ref{lfourfour}, we may write, for $u\in\BBr$ 
and $n$ large enough,
\begin{equation}
f_{n}(u)=\frac{1}{nq_n}\log
\frac{\int_{-\pi}^{\pi}e^{-inp_nt}\Phi_{X^{(n)},Y^{(n)}}^{nq_n}(t,u)dt}
{\int_{-\pi}^{\pi}e^{-inp_nt}\Phi_{X^{(n)},Y^{(n)}}^{nq_n}(t,0)dt}.
\label{llave}
\end{equation}
 Using twice equation (\ref{foufou}) we may rewrite (\ref{llave}) as 
\begin{equation}
f_{n}(u)=\frac{1}{nq_n}\left[\log\BBp\left(\sum_{j=1}^{nq_n}\widehat{X}^{(n)}_{u,j}
=np_n\right)-\log\BBp\left(S_{n}=np_n\right)\right]+\psi_{Y^{(n)}}(u),
\label{llave1}
\end{equation}
where $\widehat{X}^{(n)}_{u,1},\cdots,\widehat{X}^{(n)}_{u,n}$ are independent copies of $\widehat{X}^{(n)}_u$ defined  in Subsection \ref{newprob} by equation (\ref{newchap}). 
In order to apply  Lemma \ref{llap1} to $\widehat{X}^{(n)}_{u,i}$, let us
prove that
\begin{equation}\label{cvunif3}
\forall \, s \in I_{\tau} \; \lim_n\sup_{t\in \R} \left| \E\left[\blp e^{(it+s)\widehat{X}^{(n)}_{u}} -e^{(it+s) \widehat{X}_{u}}\brp\right]\right|=0.
\end{equation}
We have, for all $s \in I_{\tau}$,
\begin{align*}
\left| \E\left[\blp e^{(it+s)\widehat{X}^{(n)}_{u}} -e^{(it+s)
\widehat{X}_{u}}\brp\right]\right|&\leq Ce^{-\psi_Y(u)}\left|\E\left[ e^{u Y^{(n)}} e^{(s+it) X^{(n)}}- e^{uY}e^{(s+it) X }\right]\right|.
\end{align*}
The right hand side of this last inequality tends to $0$ by assumption 3. of Theorem \ref{thpgd}.
It remains to prove that $p_n/q_n$ (resp. $p/q$) belongs to
$R_{\hat{X}_u^{(n)}}$ (resp. $R_{\hat{X}_u}$). Using the fact that $\psi_{X,Y}$ is essentially smooth and Assumption 3. of Theorem \ref{thpgd} it is easy to see that $R_{\hat{X}_u}=R_X$ (resp. $R_{\hat{X}_u^{(n)}}=R_{X^{(n)}}$ at least for $n$ large enough). 

Applying  Lemma \ref{llap1}
 we obtain, for $u\in\BBr$
$$f(u):=
\lim_{n\rightarrow\infty}
\frac{1}{nq_n}\log\BBe\left[\exp(u T_{n})\left|S_{n}=np_n\right.\right]
=-[\psi^{*}_{\widehat{X_{u}}}(p/q)-\psi_{Y}(u)-\psi^{*}_{X}(p/q)].$$
The convex dual function $f^{*}$ of $f$ is given by
\begin{eqnarray}
f^{*}(y):=\sup_{u\in\BBr}[u y-f(u)]&=&
\sup_{u\in\BBr}\left(u y+
\left[\psi^{*}_{\widehat{X_{u}}}(p/q)-\psi_{Y}(u)\right]\right)
-\psi^{*}_{X}(p/q)\nonumber
\\
&=&\sup_{(u,\xi)\in{\mbox{dom} \psi_{X,Y}}}\left[
uy+\xi\frac{p}{q}-\psi_{X,Y}(\xi,u)\right]-\psi^{*}_{X}(p/q)\nonumber
\\
&=& \psi_{X,Y}^{*}\left(\frac{p}{q},y\right)-\psi^{*}_{X}(p/q).\label{dudu}
\end{eqnarray}
As $\psi_{X,Y}$ is essentially
smooth, using Theorem 26.3 in \cite{Roc72}, we deduce that $\psi_{X,Y}^{*}$ is essentially strictly convex. Hence, using
once more Theorem 26.3 in \cite{Roc72}, we may deduce that $f$ is essentially smooth.
Therefore we can apply G\"artner-Ellis Corollary \ref{gel} (see Theorem
2.3.6. (c) in
\cite{DeZ98}) and conclude.\QED
\subsection{Proof of Theorem \ref{modev}}
Let $\tilde{T_n}=T_n-nq_n\EMD$ and
$$
g_n(u)=a_n\log\Blp\E\blp e^{\tilde{T}_nu / \sqrt{na_nq_n}}|S_n=np_n\brp \Brp.
$$
Proceeding as in the proof of Theorem \ref{thpgd}, we have
\begin{align*}
g_n(u)&=a_n\log\frac{\int_{-\pi}^\pi e^{-inp_nt}\Phi^{nq_n}_{X^{(n)},Y^{(n)}-\EMD}(t,u/\sqrt{na_nq_n})}{ \int_{-\pi}^\pi e^{-inp_nt}\Phi^{nq_n}_{X^{(n)},Y^{(n)}-\EMD}(t,0)}\\
&= a_n\Blp\log\P\blp\sum_{j=1}^{nq_n}\Horj=np_n\brp-\log\P\blp S_n=np_n\brp\Brp+a_nnq_n\psi_{Y^{(n)}-\EMD}(u/\sqrt{na_nq_n}),
\end{align*}
where $\Horj$ are i.i.d. r.v. on $\N$ with distribution defined in Subsection \ref{newprob} by  Equation (\ref{horreur}).

In order to use Lemma \ref{llap1} we first have to prove that
\begin{equation}\label{cvunif6}
\forall \, s \in I_{\tau} \, , \; \lim_n\sup_{t\in \R} \left| \E\left[e^{(it+s)\Hor} -e^{(it+s) \Horsn}\right]\right|=0.
\end{equation}
We have
$$
\left| \E \left[ e^{(it+s) \Hor} -e^{(it+s)
\Horsn }\right] \right|  \leq C e^{-\psi_{Y-\EMDA}(0)} \left| \E \left[
e^{(s+it) X^{(n)}}- e^{(s+it) X }\right]\right|,
$$
which tends to zero by assumption (\ref{hyp2}). As in the proof of Theorem \ref{thpgd} it is easy to prove that $R_{\Horsn}=R_X$ and $R_{\Hor}=R_{X^{(n)}}$.

Using Lemma \ref{llap1} we obtain, for $u\in\BBr$
$$g_n(u)\stackrel{n\to+\infty}{\sim}
-q_nna_n\left[\psi^{*}_{\Hor}(p_n/q_n)-\psi_{Y^{(n)}-\EMD}(u/\sqrt{na_nq_n})-\psi^{*}_{X^{(n)}}(p_n/q_n)\right].$$
Define
$$H_n(h)=\sup_{\xi\in\BBr}\left[\xi \frac{p_n}{q_n}-\psi_{X^{(n)},Y^{(n)}-\EMD}(\xi,h)\right].$$
As
$$
\psi^{*}_{\Hor}(p_n/q_n)=\sup_x\left(\frac{p_n}{q_n}x-\psi_{\widehat{X}^{(n)}_{u}}(x)\right),$$
and
$$\psi_{\widehat{X}^{(n)}_{u}}(x)=\psi_{X^{(n)},Y^{(n)}-\EMD}(x,u/\sqrt{na_nq_n})-\psi_{Y^{(n)}-\EMD}(u/\sqrt{na_nq_n})\, ,$$
we get
\begin{equation}\label{equivg}
g_n(u)\stackrel{n\to+\infty}{\sim}-q_nna_n\blp
H_n(u/\sqrt{na_nq_n})-H_n(0)\brp\, .
\end{equation}
We claim that if $\lim_nh_n=0$, then 
\begin{equation}\label{hn}
\lim_n\frac{H_n(h_n)-H_n(0)}{h_n^2}=-\frac{\alpha^2_{\tau_n}}2+O(1).
\end{equation}
Assuming that (\ref{hn}) is true, and as $\alpha^2_{\tau_n}\to \alpha^2_\tau$, we get that

$$
\lim_ng_n(u)=-u^2\frac{\alpha^2_{\tau}}{2}.
$$
We easily conclude, since
$$
g^*(y)=\sup_u\left\{uy+\lim_n g_n(u)\right\}=\frac{y^2}{2\alpha^2_{\tau}}.
$$
It remains to prove that (\ref{hn}) is true. Recall that 
$$H_n(h)=\sup_{\xi\in\BBr}\left[\xi \frac{p_n}{q_n}-\psi_{X^{(n)},Y^{(n)}-\EMD}(\xi,h)\right].$$
In the sequel $\psi'_x$ (resp. $\psi'_y$) will denote the partial derivative
of $\psi_{X^{(n)},Y^{(n)}-\EMD}(\xi,h)$ with respect to the first
(resp. second) variable. 
On one hand, by assumption (\ref{hyp3}), we can define on $I_{\tau} \times B_0$ the function $F_n$ by:
$$F_n(\xi,h)=\psi'_x(\xi,h)-p_n/q_n \, .$$ We then deduce from the implicit function
Theorem that there exists a neighborhood of $(\tau_n,0)$ on which:
$$H_n(h)=\xi_n(h) \frac{p_n}{q_n}-\psi_{X^{(n)},Y^{(n)}-\EMD}(\xi_n(h),h) \,
,$$
with $$\xi_n'(h)=-\frac{\psi''_{xy}(\xi_n(h),h)}{\psi''_{xx}(\xi_n(h),h)} \, .
$$
We can then calculate the derivatives of $H_n$ (in the sequel we omit the
argument $(\xi_n(h),h)$ in the derivatives). We have (with obvious notations) 
\begin{align*}
H'_n(h)&=-\psi'_y,\\
H''_n(h)&=\frac{\left(\psi''_{x,y}\right)^2}{\psi''_{x,x}}-\psi''_{y,y},\\
H^{(3)}_n(h)&=\left(\frac{\psi''_{x,y}}{\psi''_{x,x}}\right)^3\psi^{(3)}_{x,x,x}
-3\left(\frac{\psi''_{x,y}}{\psi''_{x,x}}\right)^2\psi^{(3)}_{x,x,y}
+3\frac{\psi''_{x,y}}{\psi''_{x,x}}\psi^{(3)}_{x,y,y}
-\psi^{(3)}_{y,y,y} \, .
\end{align*}
Replacing the partial derivative of $\psi$ by its expression, we get 
$$
H'_n(0)=0\mbox{ and } H''_n(0)=-\alpha^2_{\tau_n} \, .
$$
On the other hand, using a
Taylor expansion, we get
\begin{equation}\label{tfi1}
H_n(h_n)-H_n(0)=h_nH'_n(0)+\frac{h_n^2}2H''_n(0)+\frac{h_n^3}6H_n^{(3)}(z_n),\qquad z_n\in[0,h_n].
\end{equation}

Hence (\ref{tfi1}) becomes
\begin{equation}
H_n(h_n)-H_n(0)=-\frac{h_n^2\alpha^2_{\tau_n}}2+\frac{h_n^3}6H_n^{(3)}(z_n),\ z_n\in[0,h_n].
\end{equation}
Now the expression of $H^{(3)}_n$ is a rational fraction of some partial derivatives of \\$ \E\left(e^{\xi X^{(n)}+h(Y^{(n)}-\EMD)}\right).$ The denominator of this rational fraction is bounded away from $0$ as it converges to a variance and numerator is bounded by (\ref{hyp3}). Hence  $H^{(3)}_n$ is bounded and the claim is proved.\QED
\section{Examples}\label{discution}
In this section we give two examples of applications and one counter example. These  examples are borrowed from \cite{Ja01}.    
\subsubsection{Occupancy problem}\label{exocc}
In the classical occupancy problem (see \cite{Ja01} and the references therein for more details), $m$ balls are distributed at random into $N$ urns. The resulting numbers of balls $Z_1,\cdots,Z_N$ have a multinomial distribution, and it is well-known that this equals the distribution of $\blp X_1,\cdots,X_N\brp$ conditioned on $\sum_{i=1}^N X_i=m$, where $ X_1,\cdots,X_N$ are i.i.d. with $X_i\sim\mathcal{P}(\lambda),$ for an arbitrary $\lambda>0$. The classical occupancy problem studies the number $W$ of empty urns; this is thus $\sum_{i=1}^N \ind{\{X_i=0\}}$ conditioned on $\sum_{i=1}^N X_i=m$.\\
Now suppose that $m=np_n\to\infty$ and $N=nq_n\to\infty$ with $\frac{p_n}{q_n}\to\frac{p}q$. Take $X_i^{(n)}\sim\mathcal{P}(\lambda_n)$. Note that we do not assume that $\lambda_n=p_n/q_n$ and $\lambda=p/q$ which is the case in Janson's work. 
It is easy to see that Assumption 3. of Theorem \ref{thpgd} is fulfilled and that $\psi_{X,Y}$ is essentially smooth. Moreover, for $(x,y)\in\R^2$ and $(p,q)\in(\R^*)^2$, we have
\begin{align*}
\psi_X(x)&=-\lambda+\lambda e^x,\\
\psi^*_X(p/q&)=p/q\log(\frac{p}{q\lambda})+\lambda-\frac{p}{q},\\
\psi_{X,Y}(x,y)&=-\lambda+\log\Blp e^{\lambda\exp(x)	}-1+e^y\Brp.
\end{align*}
Hence we can apply Theorem \ref{thpgd}. Here the function $\psi^*_{X,Y}$
does not have any explicit form. We give in Appendix the graph of the rate
function for some particular values of $p/q $ and $\lambda$. Assumptions
of Theorem \ref{modev} are obviously fulfilled. We have
\begin{align*}
\E\blp \check{Y}^{(n)}_{\tau_n}\brp&=e^{-\lambda_n\exp(\tau_n)},\\
\P(\check{X_\tau}=k)&=e^{-\lambda\exp(\tau)}(e^{\tau}\lambda)^k/k!.
\end{align*}
Hence $\check{X_\tau}$ is Poisson with parameter $\lambda e^\tau$.  An easy calculation gives
\begin{align*}
\mathrm{cov}\blp\check{X}_\tau,\check{Y}_\tau\brp&=-\lambda e^\tau e^{-\lambda\exp(\tau)},\\
\mathrm{Var}(\check{Y}_\tau)&=e^{-\lambda\exp(\tau)}(1-e^{-\lambda\exp(\tau)}).
\end{align*}
Hence $\alpha^2_\tau=e^{-\lambda\exp(\tau)}\Blp
1-e^{-\lambda\exp(\tau)}+\lambda e^\tau e^{-\lambda\exp(\tau)}\Brp.$ Now, as
$\tau=\log(\frac{p}{q\lambda})$, we get $J(.)=\frac{(.)^2e^\lambda}{ 1-e^{-\lambda}+\lambda
  e^{-\lambda}}$ in the particular case where $\lambda=p/q$. Note that functional L.D.P. is given in \cite{BLG}.
\begin{rem}
Theorem \ref{thpgd} allows us to deal with other statistics than
$\sum_{i=1}^n \ind{\{X_i=0\}}$. For example, statistics of the form
$\sum_{i=1}^n f(X_i,Z_i)$ where $Z_1, \ldots , Z_n$ are i.i.d. and
independent from $X_1, \ldots , X_n$. Let us describe the particular case of bootstrap see \cite{Efron}.
Let $Z_1, \ldots , Z_n$ be i.i.d. real valued random variables, independent
of $X_1, \ldots , X_n$. We choose at
random with replacement a sample
$Z_1^*, \ldots, Z_n^*$.
Then $\sum_{i=1}^n f(Z_i^*)$ is distributed as $\sum_{i=1}^n X_i f(Z_i)$
conditioned on $\sum_{i=1}^n X_i=m$, where $X_i \sim \mathcal{P}(\lambda) \,
, \; i=1, \ldots , n$ for any $\lambda >0$. Hence we get the same kind of
conditioning as for the occupancy problem.
\end{rem}
\subsubsection{Branching processes}
Consider a Galton-Watson process, beginning with one individual, where the
number of children of an individual is given by a random variable $X$ having
finite moments. Assume further that $ \E(X)=1$. We number the individuals as
they appear. Let $X_i$ be the number of children of the $i-$th individual. It is well known (see example 3.4 in \cite{Ja01} and the references therein) that the total progeny is $n\geq 1$ if and only if
\begin{equation}
S_k:=\sum_{i=1}^kX_i\geq k\mbox{ for } 0\leq k<n \mbox{ but }S_n=n-1 \, .
\label{GW1}
\end{equation}
This type of conditioning is different from the one studied in the present
paper, but
Janson proves \cite[Example 3.4]{Ja01} that if we ignore the order of $X_1,
\ldots, X_n$, conditioning on (\ref{GW1}) is equivalent to conditioning
on $S_n=n-1$.
Hence we can
study variables of the kind $Y_i=f(X_i)$. Considering the case where $Y_i=\ind{\{X_i=3\}}$,
the $\sum_{i=1}^nY_i$ is the number of families with three children. Now
choosing $X_i\sim\mathcal{P}(\lambda)$, we compute the rate function as in
Example \ref{exocc}.
\subsubsection{Hashing}
\begin{bf}The model\end{bf}\\
Hashing with linear probing can be regarded as throwing $n$ balls
sequentially into $m$ urns at random; the urns are arranged in a circle and a
ball that lands in an occupied urn is moved to the next empty urn, always
moving in a fixed direction. The length of the move is called the
displacement of the ball, and we are interested in the sum of all
displacements which is a random variable noted $d_{m,n}$. We assume $n<m$.
\\
After throwing all balls, there are $N=m-n$ empty urns.  These divide the occupied urns into blocks of consecutive urns. For convenience, we consider the empty urn following a block as belonging to this block. 
Janson \cite{Ja011} proved that the length of the blocks (counting the
empty urn) and the sum of displacements inside each block are distributed as
$(X_1,Y_1),\ldots,(X_N,Y_N)$ ($N=m-n$) conditioned on $\sum_{i=1}^NX_i=m$, where $(X_i,Y_i)$ are i.i.d. copies of
a pair $(X,Y)$ of random variables. $X$ has the Borel distribution
\begin{equation}
\P\blp X=l\brp=\frac{1}{T(\lambda)}\frac{l^{l-1}}{l!}\lambda^l,\quad
l\in\N^*,\ z\in[0,e^{-1}[ \, ,
\end{equation}
 where $T(\lambda)=\sum_{l=1}^\infty\frac{l^{l-1}}{l!}\lambda^l$ is the well-known tree function
 and $\lambda$ is an arbitrary number with $0<\lambda\leq e^{-1}$. The conditional distribution of $Y$ given $X=l$ is the same as the distribution of $d_{l,l-1}$.\\
Using Janson's results \cite{FPV97,Ja011,Ja01}, unfortunately we can prove that the joint
 Laplace transform of $(X_1,Y_1)$ is defined only on $(-\infty,a)\times(-\infty,0)$ for some positive $a$. Hence our
 results can not be applied. Nevertheless, in a forthcoming work, we will study conditioned L.D.P for self-normalized sums in the spirit of \cite{Shao03}. In that case the Laplace will be defined.
\subsubsection{Bose-Einstein statistics}
This example is borrowed from \cite{Holst79}. Consider $N$ urns. Put $n$ indistinguishable balls in the urns in such a way that each distinguishable outcome has the same probability i.e.,
$$
1/ \begin{pmatrix}n+N-1\\n\end{pmatrix},
$$
see for example \cite{Feller68}. Let $Z_k$ be the number of balls in the
$k$th urn. It is well known that $(Z_1,\ldots,Z_N )$ is distributed as $\blp
X_1,\cdots,X_N\brp$ conditioned on $\sum_{i=1}^N X_i=n$, where $
X_1,\cdots,X_N$ are i.i.d. with a geometric distribution. As for Example
\ref{exocc}, we can get a L.D.P for variables of the form $\sum_i h(X_i)$ if
$\mbox{dom}\psi_{h(X_i)}=\R$.
\subsubsection{Possible extensions}
Among possible extensions, let us mention the case where the variables $Y_i$
are independent but do not have the same distribution. 
This case occurs in \cite[Examples 2 and 3]{Holst79}, where the quantity of
interest is the law of $\sum_{i=1}^Nh_i(X_i)$ conditioned on the event
$\sum_{i=1}^NX_i=n$. Another way to extend our work is to deal with the case
where the variables $X_i$ are independent but not i.i.d.. This case occurs when counting from a random permutation the number of cycles of a fixed size see for example \cite[Chapter 1]{ABT03}

In the present paper we assume that $p_n,q_n,p$ and $q$ are positive. The
other cases will be considered in a forthcoming work.

\section{Appendix}\label{app}
Here we give the shape of the large deviation rate function 
in the example of the Occupancy problem when $\lambda=1$ and $p/q=1$, $p/q=0.4$ or $p/q=3$

\begin{center}
\begin{figure}[hb]
\psfrag{A}{Rate function for $\lambda=1$, $p/q=1$}
\psfrag{B}{Rate function for $\lambda=1$, $p/q=3$}
\psfrag{C}{Rate function for $\lambda=1$, $p/q=0.4$}
\includegraphics[height=7cm]{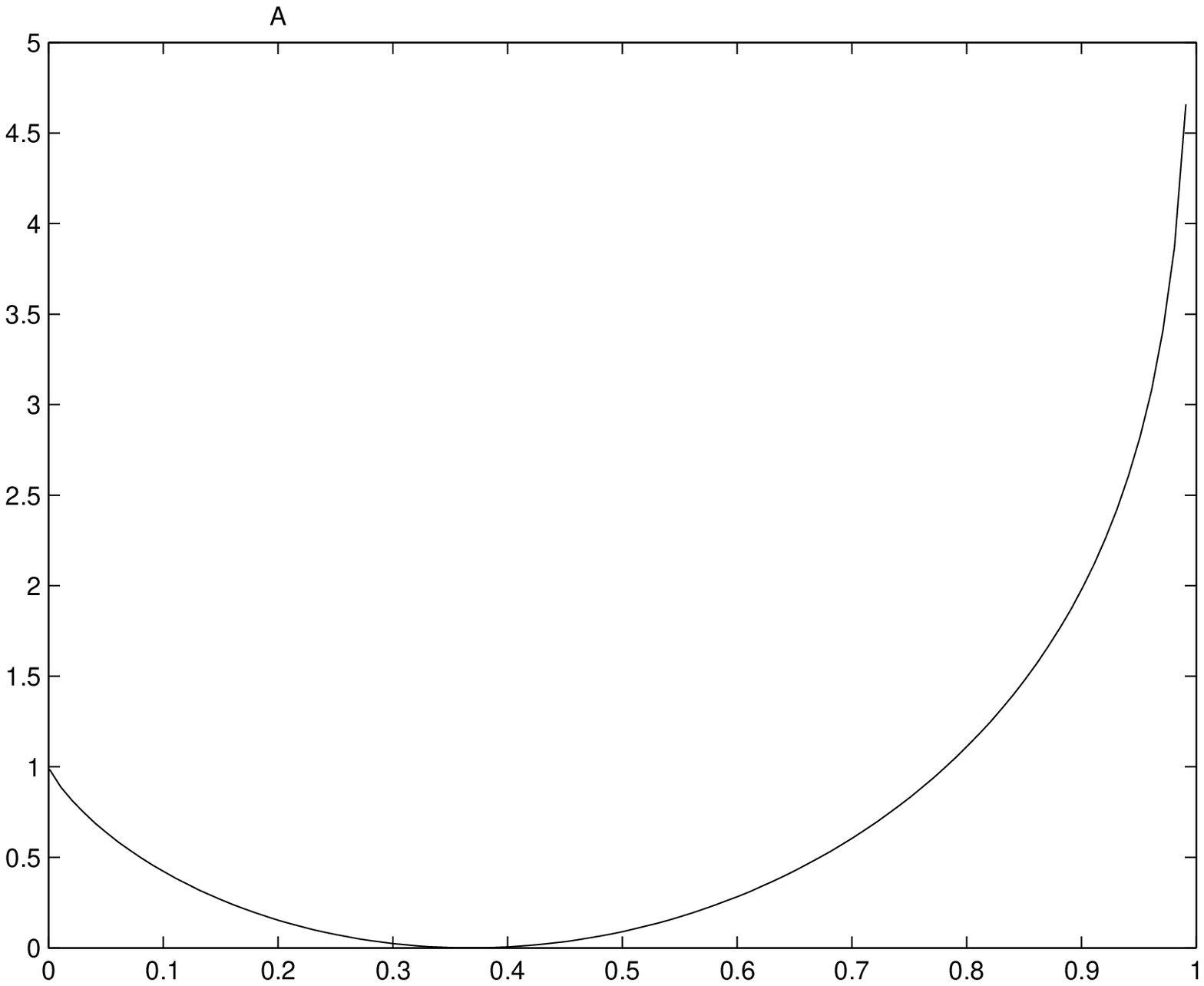} 
\includegraphics[height=7cm]{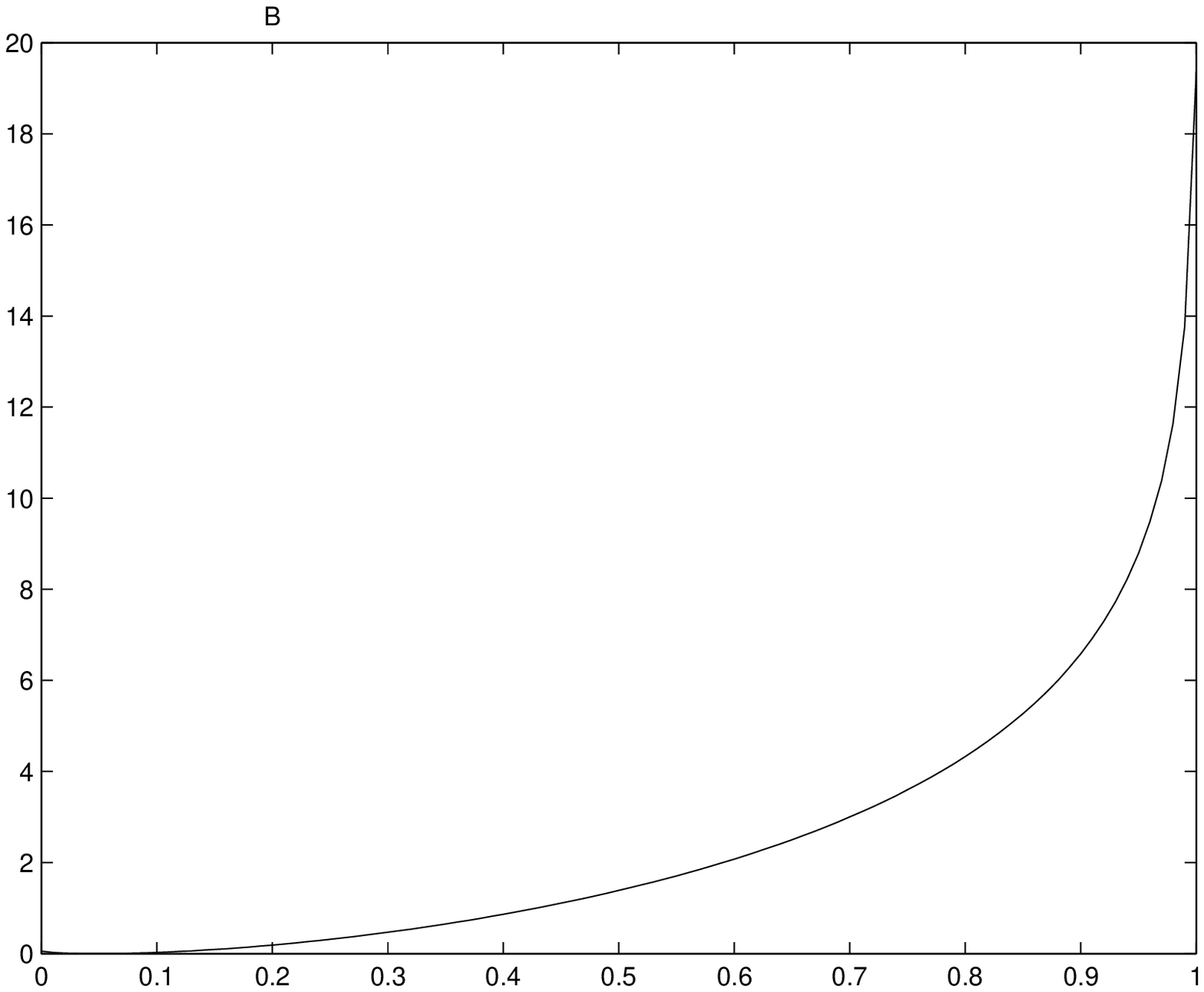}
\includegraphics[height=7cm]{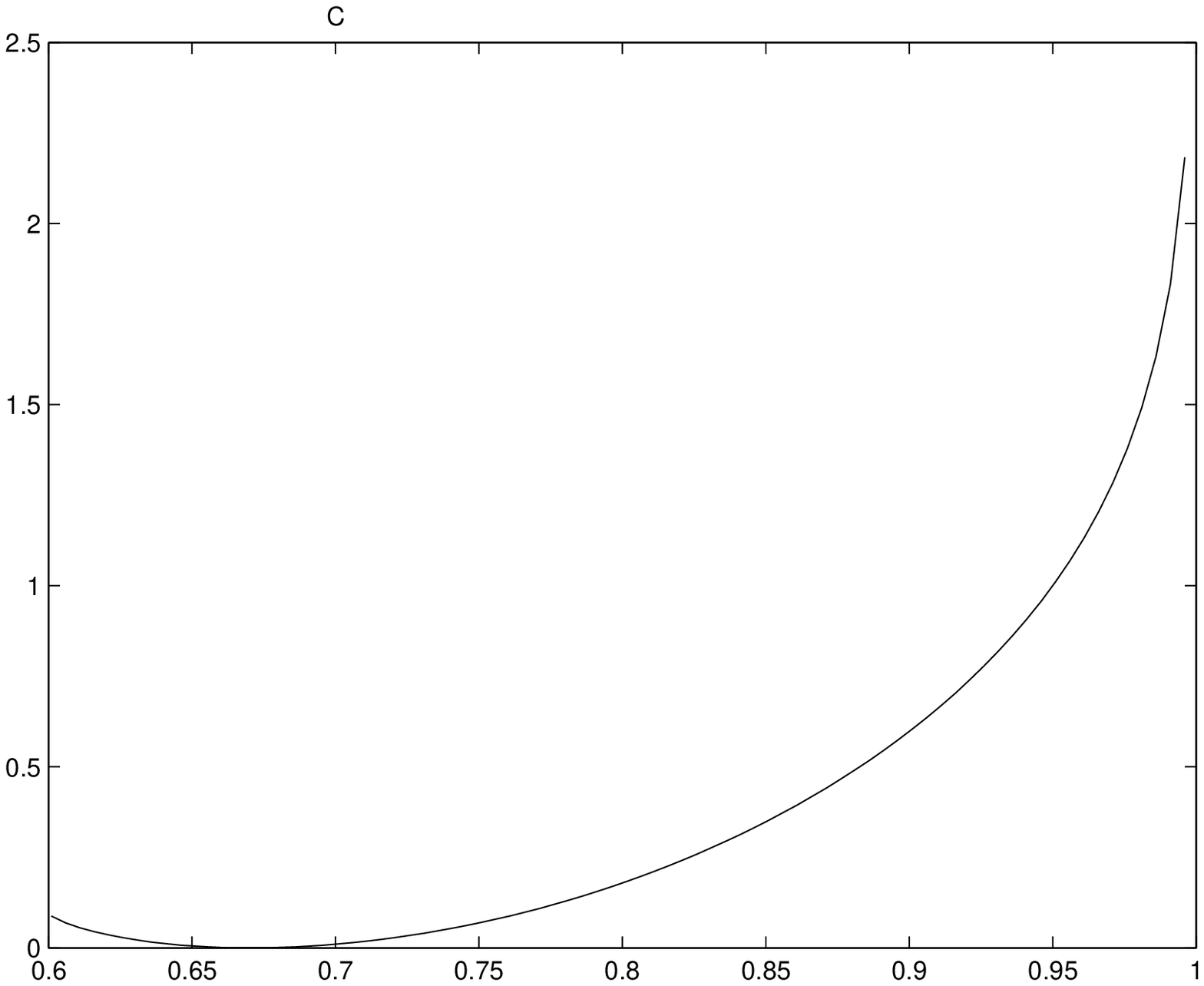}
\end{figure}
\end{center}

\end{document}